\title{SDEs with no strong solution arising from a problem of stochastic control}
\author{Alexander M.\ G.\ Cox}
\address{\scriptsize{Department of Mathematical Sciences, University of Bath, Bath, U.K.}}
\email{\vspace{-0.5ex}\scriptsize{a.m.g.cox@bath.ac.uk}}
\author{Benjamin A.\ Robinson}
\address{\scriptsize{Universit\"at Wien, Vienna, Austria}}
\email{\vspace{-0.5ex}\scriptsize{ben.robinson@univie.ac.at}}
\date{\today}
\thanks{BR is supported by a scholarship from the EPSRC Centre for Doctoral Training in Statistical Applied Mathematics at Bath (SAMBa), under the project EP/L015684/1, and by the Austrian Science Fund (FWF) projects Y782-N25 and P35519.\\
The authors are grateful to the anonymous referee, whose comments led to strengthening the main results of this article.
}
\begin{document}

\maketitle
	
\begin{abstract}
	We study a two-dimensional stochastic differential equation that has a unique weak solution but no strong solution. We show that this SDE shares notable properties with Tsirelson's example of a one-dimensional SDE with no strong solution. In contrast to Tsirelson's equation, which has a non-Markovian drift, we consider a strong Markov martingale with Markovian diffusion coefficient. We show that there is no strong solution of the SDE and that the natural filtration of the weak solution is generated by a Brownian motion. We also discuss an application of our results to a stochastic control problem for martingales with fixed quadratic variation in a radially symmetric environment.
\end{abstract}

\section{Introduction}\label{sec:intro}

In this paper, we study the following two-dimensional SDE with Markovian diffusion coefficient, started from the origin. Let $B$ be a real-valued Brownian motion and consider the SDE
	\begin{equation}\label{eq:SDE-intro-1}
		\D X_t = \frac{1}{\abs{X_t}}
		\begin{bmatrix}
			- X^2_t\\
			X^1_t
		\end{bmatrix}
		\D B_t; \quad X_0 \sim \delta_0,
	\end{equation}
	where we denote $X_t = (X^1_t, X^2_t)^\top \in \RR^2$. It is shown by Larsson and Ruf in \cite{larsson_relative_2021} that the SDE \eqref{eq:SDE-intro-1} has a weak solution. Here we show that there does not exist a  strong solution of \eqref{eq:SDE-intro-1}. Moreover, we show that uniqueness in law holds for \eqref{eq:SDE-intro-1} and that the weak solution shares notable properties with Tsirelson's example of an SDE with no strong solution given in \cite{tsirelson_example_1976}. In particular, the natural filtration of the weak solution is generated by a Brownian motion, which implies that the initial sigma-algebra is trivial. We also show that the angle process of the solution is independent of its increments and deduce that it is independent of the driving Brownian motion. Together, these properties imply that the filtration generated by the weak solution at any positive time contains some additional information not present at time zero. This remarkable property of Tsirelson's equation is emphasised by Rogers and Williams in \cite[V.18]{rogers_diffusions_2000}.

Tsirelson's example is a one-dimensional SDE with path-dependent drift. A result of Zvonkin \cite{zvonkin_transformation_1974} shows that this path dependence is necessary; for a one-dimensional SDE of the form $\D X_t = b_t(X_t) \D t + \D W_t$, with $b$ bounded and measurable, a strong solution always exists. In contrast to Tsirelson's example, the SDE \eqref{eq:SDE-intro-1} that we study defines a two-dimensional martingale, and the diffusion coefficient is Markovian.

We will show that the weak solution of \eqref{eq:SDE-intro-1} generates a Brownian filtration, by making use of a connection with circular Brownian motion. We take inspiration from the paper \cite{azema_remark_1999}, in which \'Emery and Schachermayer showed that a weak solution of Tsirelson's equation generates a Brownian filtration, by constructing a bijection with a circular Brownian motion.

In \cite{cox_optimal_2021}, we studied a control problem for martingales with a fixed quadratic variation, for which we can explicitly identify the value function and the optimal controls. Under certain conditions, the weak solution of \eqref{eq:SDE-intro-1} is optimal. However, under a particular growth condition on the cost function, the question of whether weak and strong versions of the control problem coincide is left open. In this paper, we will show that such problems are in fact equivalent, and that the cost induced by the weak solution of \eqref{eq:SDE-intro-1} attains the strong value function.

\subsection{Main results}
	The main contribution of this paper is to present an SDE for a martingale with Markovian diffusion coefficient, which has no strong solution and shares many interesting properties with Tsirelson's path-dependent one-dimensional example from \cite{tsirelson_example_1976}.
	\begin{theorem}\label{thm:intro-1}
		There exists a unique (in law) weak solution $((X, W), \psp, \mathbb{F})$ of the SDE \eqref{eq:SDE-intro-1}, but there is no strong solution. Moreover,
		\begin{itemize}
			\item the process $X$ generates a Brownian filtration; 
			\item after a deterministic time-change, the angle process of $X$ is uniformly distributed and independent of the driving Brownian motion;
			\item taking the supremum of the natural filtration of $B$ at any time $t > 0$ and the sigma-algebra generated by the angle process of $X$ at any time $s \in (0, t)$ recovers the natural filtration of $X$ at time $t$;
			\item the process $X$ is a strong Markov process.
		\end{itemize}
	\end{theorem}
	
	We further obtain a result on non-existence of strong solutions of the following SDEs, whose behaviour approximates that of solutions of the SDE \eqref{eq:SDE-intro-1}. Let $B$ be a one-dimensional Brownian motion and let $\lambda \in (0,1)$ be a fixed constant. Consider the two-dimensional SDE
		\begin{equation}\label{eq:SDE-intro-2}
			\D X_t = \frac{1}{\abs{X_t}}
			\begin{bmatrix}
				\lambda X^1_t - \sqrt{1 - \lambda^2} X^2_t\\
				\lambda X^2_t + \sqrt{1 - \lambda^2} X^1_t
			\end{bmatrix}
			\D B_t;
			\quad X_0 \sim \delta_0.
		\end{equation}
	\begin{theorem}\label{thm:intro-2}
		There exists no strong solution of the SDE \eqref{eq:SDE-intro-2}. Uniqueness in law holds for \eqref{eq:SDE-intro-2} up to the first hitting time of the origin.
	\end{theorem}
	We will also show that, after a deterministic time change, the radius of the weak solution of \eqref{eq:SDE-intro-2} is a $\lambda^{-2}$-dimensional Bessel process.

\subsection{SDEs with no strong solution in the literature}

We begin by recalling the properties of two classical examples of SDEs with no strong solution, which will be instructive for the study of the SDE \eqref{eq:SDE-intro-1}. We emphasise the significance of Tsirelson's example in \Cref{sec:tsirelson}. For further instructive examples, see \cite[Section 1.3]{cherny_singular_2005}.

\subsubsection{Tanaka's example}
	
	A well-known example of an SDE with no strong solution is Tanaka's SDE, which is the following one-dimensional equation:
	\begin{equation}\label{eq:tanaka}
		\D X_t = \sign(X_t) \D W_t.
	\end{equation}
	The SDE \eqref{eq:tanaka} admits a unique (in law) weak solution but no strong solution. The proof of this can be found, for example, in Example 3.5 of \cite[Chapter 5]{karatzas_brownian_1998}.
	
	To prove that there is no strong solution, the key idea is to show, using the It\^o-Tanaka formula, that the inclusion
	\begin{equation}
		\mathcal{F}^{W}_t \subseteq \mathcal{F}^{\abs{X}}_t
	\end{equation}
	holds for all $t > 0$. Then it is impossible for $X$ to be adapted to $\mathcal{F}^W$, since $\mathcal{F}^{\abs{X}}_t \subsetneq \mathcal{F}^{X}_t$ for all $t >0$.
	
	In order to prove \Cref{thm:intro-1}, we show similar inclusions to the ones above, where the increments of the solution of the SDE \eqref{eq:SDE-intro-1} play the role of the absolute value of the solution of Tanaka's SDE.
	
	\subsubsection{Tsirelson's example}\label{sec:tsirelson}
	
	In his 1976 paper \cite{tsirelson_example_1976}, Tsirelson introduced the following notable example of an SDE with no strong solution, but for which weak existence and uniqueness in law holds. Tsirelson's example is the one-dimensional equation
	\begin{equation}\label{eq:tsirelson}
		\D X_t = b(t, X_.) \D t + \D W_t,
	\end{equation}
	with initial condition $X_0 = 0$, where $b$ is chosen as follows.
	
	Fix a decreasing sequence $(t_n)_{n \in - \NN \cup \{0\}}$ such that $t_0 = 1$ and $\lim_{n \to - \infty}t_n = 0$. Denote the increments of $X$ and $t$ by $\Delta X_j = X_{t_{j}} - X_{t_{j - 1}}$ and $\Delta t_j = t_j - t_{j - 1}$, respectively, and define
	\begin{equation}\label{eq:tsirelson-drift}
		b(t, X_.) \coloneqq  \sum_{k \in -\NN} \left(\frac{\Delta X_k}{\Delta t_k} - \left\lfloor\frac{\Delta X_k}{\Delta t_k}\right\rfloor\right)\mathsf{1}_{(t_{k}, t_{k + 1}]}(t).
	\end{equation}
	At time $t \in (t_{k}, t_{k + 1}]$, for some $k \in -\NN$, $b(t, X_.)$ is the fractional part of $\frac{\Delta X_{t_k}}{\Delta t_k}$.
	
	The weak solution of the SDE \eqref{eq:tsirelson} has the following properties, as proved, for example, in Theorem 18.3 of \cite[Chapter V]{rogers_diffusions_2000}:
	\begin{enumerate}[label = (T\arabic*)]
		\item \label{it:T1} At any time $t > 0$, the natural filtration of the solution $X$ has the decomposition
		\begin{equation}
			\mathcal{F}^X_t = \mathcal{F}^B_t \vee \sigma(b(t, X_.));
		\end{equation}
		\item \label{it:T2} For each $k \in -\NN$, $b(t_k, X_.)$ is uniformly distributed on $[0, 1)$ and independent of $\mathcal{F}^B_\infty$;
		\item \label{it:T3} The sigma-algebra $\mathcal{F}^X_{0+}$ is trivial.
	\end{enumerate}
	
	Note that the drift term \eqref{eq:tsirelson-drift} in Tsirelson's SDE depends on the history of the process $X$. As remarked in \cite[Chapter V]{rogers_diffusions_2000}, for bounded drifts depending only on the current value of the process, Zvonkin proved in \cite{zvonkin_transformation_1974} that a strong solution of \eqref{eq:tsirelson} always exists. Therefore the path-dependence of the drift $b$ is necessary for strong existence to fail. We emphasise that, in contrast to Tsirelson's SDE \eqref{eq:tsirelson}, the two-dimensional SDE \eqref{eq:SDE-intro-1} defines a martingale with Markovian diffusion coefficient. Nevertheless, we will show that \eqref{eq:SDE-intro-1} exhibits similar properties to \Cref{it:T1,it:T2,it:T3} above.

\subsection{Brownian filtrations and circular Brownian motion}\label{sec:brownian-filtrations-CBM}
	A natural question that arises when considering continuous-time stochastic processes is whether the natural filtration of a process is generated by a Brownian motion. In Proposition 2 of \cite{azema_remark_1999}, \'Emery and Schachermayer define a Brownian filtration as follows.

	\begin{definition}[Brownian filtration]
		A filtration is called \emph{Brownian} if it is the natural filtration of a real-valued Brownian motion starting from the origin.
	\end{definition}
	
	Note that this definition agrees with the definition of a \emph{strong Brownian filtration} given in Mansuy and Yor's book \cite[Definition 6.1]{mansuy_random_2006}.
	
	In the case of Tanaka's SDE \eqref{eq:tanaka}, any weak solution is a Brownian motion, as discussed in Example 3.5 of \cite[Chapter 5]{karatzas_brownian_1998}, and so in this case a weak solution trivially generates a Brownian filtration. For Tsirelson's example, this question remained open until the work of \'Emery and Schachermayer in 1999 \cite{azema_remark_1999}, in which they showed that the solution of Tsirelson's equation does indeed generate a Brownian filtration.
	
	In \cite{dubins_decreasing_1996}, Dubins, Feldman, Smorodinsky and Tsirelson settled an open question by presenting an example of a process that does not generate a Brownian filtration. Their proof relies on the concept of standardness, an invariant of filtrations first introduced by Vershik in the setting of ergodic theory in his doctoral thesis \cite{vershik_approximation_1973}.
	
	Another example of a process that does not generate a Brownian filtration is the diffusion that Walsh defined in \cite{walsh_diffusion_1978}, now known as Walsh's Brownian motion. In \cite{tsirelson_triple_1997}, Tsirelson proved that Walsh's Brownian motion does not generate a Brownian filtration, by introducing a new invariant of filtrations known as cosiness. Warren later used the same technique in \cite{azema_joining_1999} to prove that sticky Brownian motion also does not generate a Brownian filtration. In \cite{azema_vershiks_2001}, \'Emery and Schachermayer provide a discussion of the relationship between the two invariants standardness and cosiness, along with further references to examples of their application.

	In order to prove that the solution of Tsirelson's equation generates a Brownian filtration, neither of these invariants are used. Rather, in \cite{azema_remark_1999}, \'Emery and Schachermayer show that there is an isomorphism between the solution of Tsirelson's equation and an eternal Brownian motion on the circle, which they call \emph{circular Brownian motion} and define as follows.
	
	\begin{definition}[circular Brownian motion]\label[definition]{def:cbm}
		Let $(\phi_t)_{t \in \RR}$ be a continuous $\RR/2\pi\ZZ$-valued process. For any $s, t \in \RR$ with $s \leq t$, denote by $\int_s^t \D \phi_r$ the $\RR$-valued random variable that depends continuously on $t$, vanishes for $t = s$, and satisfies
		\begin{equation}
			\int_s^t \D \phi_r \equiv \phi_t - \phi_s \mod 2\pi.
		\end{equation}
		Let $\FF = (\mathcal{F}_t)_{t \in \RR}$ be a filtration. We say that $\phi$ is a \emph{circular Brownian motion} for $\FF$ if $\phi$ is adapted to $\FF$ and, for each $s \in \RR$, the process
		\begin{equation}
			[s, \infty) \ni t \mapsto \int_s^t \D \phi_r
		\end{equation}
		is a standard Brownian motion for the filtration $(\mathcal{F}_t)_{t \in [s, \infty)}$.
	\end{definition}
	
	Proposition 3 of \cite{azema_remark_1999} shows that any deterministic time-change of a circular Brownian motion generates a Brownian filtration. The proof uses the notion of chopped Brownian motion and a coupling argument.
	
	In this work, we show that the angle process of the weak solution of the SDE \eqref{eq:SDE-intro-1} is a deterministic time-change of a circular Brownian motion, thus relating this SDE to Tsirelson's example. We frequently make use of the connection to circular Brownian motion and the results of \cite{azema_remark_1999} to show that the SDE \eqref{eq:SDE-intro-1} has no strong solution and that the weak solution generates a Brownian filtration. 
	
	Having shown that the weak solution $X$ of \eqref{eq:SDE-intro-1} generates a Brownian filtration, an immediate consequence will be that the initial sigma-algebra $\bigcap_{s} \sigma(X_s)$ is trivial. Therefore, for any fixed $t > 0$, $\mathcal{F}^B_t = \mathcal{F}^B_t \vee \left(\bigcap_{s \leq t} \sigma(X_s)\right)$. On the other hand, for any $s \leq t$, we will show that $\mathcal{F}^B_t \vee \sigma(X_s) = \mathcal{F}^X_t$. Since $X$ is not a strong solution, $\mathcal{F}^X_t \not \subseteq \mathcal{F}^B_t$. In fact, we have the strict inclusion
	\begin{equation}
		\mathcal{F}^B_t \vee \left(\bigcap_{s \leq t} \sigma(X_s)\right) \subsetneq \bigcap_{s \leq t} \left(\mathcal{F}^B_t \vee \sigma(X_s)\right).
	\end{equation}
	Exchanging the order of taking intersections and suprema of sigma-algebras are discussed in detail by von Weizs\"acker in \cite{von_weizsacker_exchanging_1983}. The inclusion above holds in general, and von Weizs\"acker gives conditions under which there is equality. Both the SDE \eqref{eq:SDE-intro-1} that we study in this paper and Tsirelson's example \eqref{eq:tsirelson} give continuous-time counterexamples, for which von Weizs\"acker's conditions are not satisfied and the inclusion is strict. A related discrete-time counterexample is given in \cite{von_weizsacker_exchanging_1983}.

\subsection{Application to a control problem}
	In the paper \cite{cox_optimal_2021}, we study the following control problem. We seek the value
	\begin{equation}
		\inf \EE \left[\int_0^\tau f(X_t) \D t\right],
	\end{equation}
	where the infimum is taken over a set of martingales with fixed quadratic variation, stopped on exiting a ball in $\RR^d$, and the value function $f$ is radially symmetric. The main result of \cite{cox_optimal_2021} is that there is a closed form expression for the value function, and that an optimal control is to switch between two regimes. The first of these regimes is a one-dimensional Brownian motion on a radial line, while the second is a weak solution of the SDE \eqref{eq:SDE-intro-1}. In \cite{cox_optimal_2021} we call the behaviour of solutions of \eqref{eq:SDE-intro-1} \emph{tangential motion}, since such a process moves on a tangent to its current position. The remarkable property of this process is that it is a two-dimensional martingale whose radius is deterministically increasing. In particular, for a cost function that is radially decreasing, we show that tangential motion is optimal. In general, we identify optimal controls only in a weak sense, but we show that weak and strong formulations of the control problem coincide, similarly to the results of \cite{el_karoui_capacities_2013-1}. However, when weak solutions of \eqref{eq:SDE-intro-1} are optimal, under a particular growth condition on the cost function, the question of equality between weak and strong value functions is left open in \cite{cox_optimal_2021}. In \Cref{sec:control} of the present paper we settle this question, showing that the value functions are in fact equal. Since the weak solution of the SDE \eqref{eq:SDE-intro-1} generates a Brownian filtration, we can argue by isomorphism that there is a strong control that attains the same value as the optimal weak control.

\subsection{Organisation of the article}

In \Cref{sec:sde-no-strong-solution}, we state and prove our main result \Cref{thm:no-strong-solution} on solutions of the SDE \eqref{eq:SDE-intro-1}. We start by introducing circular Brownian motion and discussing its properties in  \Cref{sec:properties-weak}. The relation between circular Brownian motion and the SDE \eqref{eq:SDE-intro-1} then leads us to show that the weak solution of \eqref{eq:SDE-intro-1} generates a Brownian filtration, among other notable properties. In \Cref{sec:non-existence}, we conclude that there exists no strong solution of \eqref{eq:SDE-intro-1}.

\Cref{sec:approx-sde} treats the class of SDEs of the form \eqref{eq:SDE-intro-2}. We show that \eqref{eq:SDE-intro-2} has similar properties to \eqref{eq:SDE-intro-1} and, in \Cref{thm:approx-no-strong}, we prove that \eqref{eq:SDE-intro-2} has no strong solution.

In \Cref{sec:control}, we apply \Cref{thm:no-strong-solution} to the control problem studied in \cite{cox_optimal_2021}. In \Cref{sec:weak-equals-strong}, we extend the main result of \cite{cox_optimal_2021}, by proving \Cref{thm:strong-extension}. In \Cref{sec:feedback}, we discuss an open question on optimality of feedback controls.

Throughout the paper, all filtrations are assumed to satisfy the usual conditions, and the following notation will be used. For a stochastic process $X$, its natural filtration augmented to satisfy the usual conditions is denoted $\FF^X = (\mathcal{F}^X_t)_t$. The quadratic variation of a process $X$ is denoted $\langle X \rangle$. The sigma-algebra generated by a random variable $\xi$ is denoted $\sigma(\xi)$.

\section{An SDE with no strong solution}\label{sec:sde-no-strong-solution}

Let $B$ be a real-valued Brownian motion and consider the two-dimensional SDE
\begin{equation}\label{eq:SDE-gamma}
		\D X_t = \frac{1}{\abs{X_t}}
		\begin{bmatrix}
			- X^2_t\\
			X^1_t
		\end{bmatrix}
		\D B_t; \quad X_0 \sim \delta_0.
\end{equation}
By \cite{larsson_relative_2021}, the SDE \eqref{eq:SDE-gamma} has a weak solution. A simulation of such a solution is shown in \Cref{fig:tangential-motion}. The main result of the present paper is that the SDE \eqref{eq:SDE-gamma} has no strong solution and, furthermore, that a weak solution exhibits many of the same properties as Tsirelson's famous example \eqref{eq:tsirelson} from \cite{tsirelson_example_1976}, including uniqueness in law. It is notable that our two-dimensional example is a strong Markov martingale with Markovian diffusion coefficient, in contrast to Tsirelson's one-dimensional SDE which has a non-Markovian drift.

\begin{figure}[h]\centering
	\includegraphics[width = 0.8\textwidth]{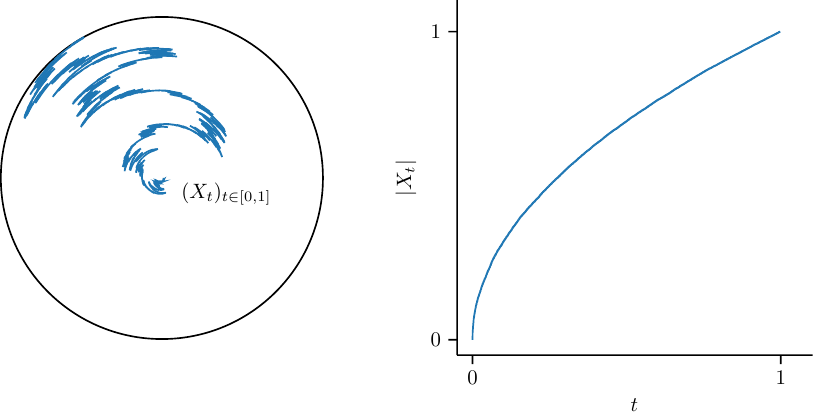}
	\caption{A simulation of a solution of the SDE \eqref{eq:SDE-gamma} (left) and its radius (right), up to the first exit time of a ball. These simulations already appeared in our paper \cite{cox_optimal_2021}.}\label{fig:tangential-motion}
\end{figure}

We now state the main result concerning solutions of the SDE \eqref{eq:SDE-gamma}. This is a more precise restatement of \Cref{thm:intro-1}

\begin{theorem}\label{thm:no-strong-solution}
	There exists a weak solution but no strong solution of the SDE \eqref{eq:SDE-gamma}. Moreover, uniqueness in law holds and the weak solution of \eqref{eq:SDE-gamma} has the following properties:
	\begin{enumerate}[label = (S\arabic*)]
		\item \label{it:S1} The natural filtration $\mathbb{F}^X$ is generated by a Brownian motion; in particular, the initial sigma-algebra $\mathcal{F}^X_{0+}$ is trivial;
		\item \label{it:S2} For any $s\in \RR$, the value of the time-changed angle process $\theta_{e^s}$ is uniformly distributed on $[0, 2\pi)$ and independent of $\mathcal{H}^\theta_\infty \coloneqq  \sigma\left(\{\theta_{e^u} - \theta_{e^r} \; \colon - \infty < r \leq u < \infty \}\right)$;
		\item \label{it:S3} For any $t > 0$, the natural filtration of $X$ at time $t$ can be decomposed as $\mathcal{F}^X_t = \mathcal{F}^B_t \; \vee \; \sigma(\theta_s)$, for any $s \in (0, t)$;
		\item \label{it:S4} The process $(X_t)_{t \geq 0}$ is a strong Markov process.
	\end{enumerate}
\end{theorem}

The existence of a weak solution of \eqref{eq:SDE-gamma} is proved by Larsson and Ruf in Theorem 4.3 of \cite{larsson_relative_2021}. We will now investigate the properties of such a weak solution and conclude that there exists no strong solution.

\subsection{Properties of weak solutions}\label{sec:properties-weak}

The key observation in our proof of \Cref{thm:no-strong-solution} is that the angle process of any solution of the SDE \eqref{eq:SDE-gamma} is a deterministic time-change of a circular Brownian motion, as defined in \Cref{def:cbm}.

We now state two properties of circular Brownian motion that are proved in \cite{azema_remark_1999}. For a circular Brownian motion $\phi$, define the \emph{innovation filtration} $\mathcal{H}$ to be the filtration generated by the increments of $\phi$;
i.e.
\begin{equation}
	\mathcal{H}_t \coloneqq  \sigma\left(\left\{\phi_s - \phi_r \colon -\infty < r \leq s \leq t\right\}\right), \quad t \in \RR.
\end{equation}
Then Proposition 1 of \cite{azema_remark_1999} states that, for any $t \in \RR$,
\begin{enumerate}[label = (C\arabic*)]
	\item \label{it:C1} $\phi_t$ is uniformly distributed;
	\item \label{it:C2} $\phi_t$ is independent of $\mathcal{H}_\infty$.
\end{enumerate}
We note the parallel between properties \Cref{it:C1,it:C2} of circular Brownian motion and the property (T2) of Tsirelson's equation \eqref{eq:tsirelson} stated in \Cref{sec:tsirelson}.

Next, we show how a circular Brownian motion arises in our example.

\begin{lemma}\label[lemma]{lem:radius-angle}
	There exists a weak solution $\left((X, B), \psp, \FF\right)$ of the SDE \eqref{eq:SDE-gamma}, and any such solution satisfies
	\begin{equation}\label{eq:radius-angle}
		X_t = \sqrt{t}
		\begin{bmatrix}
			\cos \theta_t\\
			\sin \theta_t
		\end{bmatrix},
		\quad \text{for all} \quad t > 0,
	\end{equation}
	where $\theta$ is a $\RR/2 \pi \ZZ$-valued process satisfying
	\begin{equation}\label{eq:angle-sde}
		\D \theta_t = t^{-\frac{1}{2}}\D B_t, \quad t > 0.
	\end{equation}
	In particular, the radius of $X$ is given by the deterministically increasing function $|X_t| = \sqrt t$, for $t > 0$.
\end{lemma}

\begin{proof}
	The existence of a weak solution $\left((X, B), \psp, \FF\right)$ of the SDE \eqref{eq:SDE-gamma} is given by \cite[Theorem 4.3]{larsson_relative_2021}. By It\^o's formula, for $t > 0$, we find that $|X_t| = \sqrt t$ (c.f.\ \cite[Lemma 3.1, Lemma 3.4]{cox_optimal_2021}). Now, for $t > 0$, we can write the $\RR^2$-valued random variable $X_t$ as
	\begin{equation}
		X_t = |X_t|\begin{bmatrix}
			\cos \theta_t\\
			\sin \theta_t
		\end{bmatrix}
		= \sqrt t \begin{bmatrix}
			\cos \theta_t\\
			\sin \theta_t
		\end{bmatrix},
	\end{equation}
	for a $\RR/2 \pi \ZZ$-valued random variable $\theta_t$. Applying It\^o's formula once again, we see that the process $(\theta_t)_{t > 0}$ satisfies \eqref{eq:angle-sde}.
\end{proof}

We call the process $\theta$ given in \Cref{lem:radius-angle} the \emph{angle process} of the solution $X$. We now show that this angle process is a circular Brownian motion, up to a time-change. We define a regular time-change as in \cite{azema_remark_1999}.

\begin{definition}
	A function $a: \RR \to (0, \infty)$ is a \emph{regular time-change} if $a$ is an increasing absolutely continuous bijection with absolutely continuous inverse.
\end{definition}

\begin{proposition}\label[proposition]{prop:angle-cbm}
	Let $\left((X, B), \psp, \FF\right)$ be a weak solution of the SDE \eqref{eq:SDE-gamma}. Then the associated angle process $(\theta_t)_{t > 0}$ is a regular time-change of a circular Brownian motion. Moreover, for any $s \in \RR$, the angle process is distributed as $\theta_{e^s} \sim \unif[0, 2\pi)$, independently of $\sigma\left(\left\{\theta_{e^u} - \theta_{e^r} \; \colon - \infty < r \leq u < \infty\right\}\right)$.
\end{proposition}

\begin{proof}
	Define the function $a: \RR \to (0, \infty)$ by $a(t) = e^t$, $t \in \RR$. Then $a$ is a regular time-change. Define the time-changed process
	\begin{equation}
		(\tilde{\theta}_t)_{t \in \RR} = (\theta_{a(t)})_{t \in \RR}.
	\end{equation}
	Since, for any $t > 0$, there is a one-to-one deterministic correspondence between $X_t \in \RR^2$ and $\theta_t \in \RR / 2 \pi \ZZ$, the angle process $\theta$ is adapted to $\FF$. Now define the time-changed filtration
	\begin{equation}
		\tilde{\FF} = (\tilde{\mathcal{F}}_t)_{t \in \RR} = \left(\mathcal{F}_{a(t)}\right)_{t \in \RR}.
	\end{equation}
	We will show that $\tilde{\theta}$ is a circular Brownian motion for $\tilde{\FF}$.
	
	Since $a$ is a regular time-change, $\tilde{\theta}$ is adapted to $\tilde{\FF}$. We also see that the $\RR / 2 \pi \ZZ$-valued process $\tilde{\theta}$ is continuous. Now fix $s \in \RR$ and consider the process
	\begin{equation}
		[s, \infty) \ni t \mapsto \int_s^t \D \tilde{\theta}_r = \int_s^t a(r)^{- \frac{1}{2}} \D B_{a(r)},
	\end{equation}
	using the expression \eqref{eq:angle-sde} from \Cref{lem:radius-angle}.
	
	Since $B$ is an $\FF$-Brownian motion and $a$ is a regular time-change, we have that
	\begin{equation}
		[s, \infty) \ni t \mapsto \int_s^t \D B_{a(r)}
	\end{equation}
	is a $(\tilde{\mathcal{F}}_t)_{t \in [s, \infty)}$-martingale, with quadratic variation
	\begin{equation}
		\left \langle \int_s^\cdot \D B_{a(r)}\right \rangle_t = a(t) - a(s),
	\end{equation}
	and so
	\begin{equation}
		[s, \infty) \ni t \mapsto \int_s^t \D \tilde{\theta}_r
	\end{equation}
	is a continuous $(\tilde{\mathcal{F}}_t)_{t \in [s, \infty)}$-martingale. We can calculate the quadratic variation	
	\begin{equation}
		\left\langle \int_s^\cdot \D \tilde{\theta}_r\right\rangle_t = \int_s^t a(r)^{-1} \D a(r) = t - s,
	\end{equation}
	since $a(r) = e^r$, for any $r \in \RR$.
	
	Therefore, by L\'evy's characterisation of Brownian motion, the process
	\begin{equation}
		[s, \infty) \ni t \mapsto \int_s^t \D \tilde{\theta}_r
	\end{equation}
	is an $(\tilde{\mathcal{F}}_t)_{t \in [s, \infty)}$-Brownian motion. Hence $\tilde{\theta}$ is a circular Brownian motion for $\tilde{\FF}$. It follows from properties of circular Brownian motion proved in Proposition 1 of \cite{azema_remark_1999} that, for any $s \in \RR$, $\tilde{\theta}_s$ is independent of $\sigma\left(\left\{\tilde{\theta}_u - \tilde{\theta}_r \; \colon -\infty < r \leq u < \infty\right\}\right)$ and uniformly distributed on $[0, 2\pi]$.
\end{proof}

\begin{corollary}\label[corollary]{cor:uniqueness}
	Uniqueness in law holds for the SDE \eqref{eq:SDE-gamma}.
\end{corollary}

\begin{proof}
	Let $((X, B), \psp, \FF))$ and $(\tilde X, \bar B), (\bar \Omega, \bar{\mathcal F}, \bar \PP), \bar \FF)$ be weak solutions of the SDE \eqref{eq:SDE-gamma} and write $\theta$, $\bar \theta$ for the angle processes of $X$, $\bar X$, respectively, given in \Cref{lem:radius-angle}. By \Cref{prop:angle-cbm}, each angle process is a regular time-change of a circular Brownian motion. As remarked in \cite{azema_remark_1999}, any circular Brownian motion has the same law, as a consequence of the uniformity and independence properties shown in \cite[Proposition 1]{azema_remark_1999}. Hence $\Law((\theta_t)_{t > 0}) = \Law((\bar \theta_t)_{t > 0})$. By \Cref{lem:radius-angle}, $X_t$ (resp.\ $\bar X_t$) is a deterministic function of $\theta_t$ (resp. $\bar \theta_t$), for $t > 0$, and so it follows that $\Law((X_t)_{t > 0}) = \Law((\bar X_t)_{t > 0})$.
\end{proof}

A key contribution of \'Emery and Schachermayer's paper \cite{azema_remark_1999} is to show that solutions of Tsirelson's equation generate a Brownian filtration. As discussed in \Cref{sec:brownian-filtrations-CBM}, this is done as follows. In Proposition 4 of \cite{azema_remark_1999}, the authors show that there is an isomorphism between solutions of Tsirelson's equation and circular Brownian motion, and in Proposition 3 of \cite{azema_remark_1999}, they prove that a regular time-change of a circular Brownian motion generates a Brownian filtration. It is this latter property that we exploit here, having already shown a connection between solutions of \eqref{eq:SDE-gamma} and circular Brownian motion in \Cref{prop:angle-cbm}.

\begin{corollary}\label[corollary]{cor:brownian-filtration}
	Let $\left((X, B), \psp, \FF)\right)$ be a weak solution of the SDE \eqref{eq:SDE-gamma}. Then $X$ generates a Brownian filtration.
\end{corollary}

\begin{proof}
	Write
	\begin{equation}
		X_t = \sqrt{t}
		\begin{bmatrix}
			\cos \theta_t\\
			\sin \theta_t
		\end{bmatrix},	
	\end{equation}
	where $\theta$ is the angle process of the solution, and let $\FF^X = (\mathcal{F}^X_t)_{t \geq 0}$ be the filtration generated by $X$. Then, since $X_0 = 0$ is fixed, and $X_t$ is a deterministic bijective function of $\theta_t$ for each $t > 0$, we have
	\begin{equation}
		\mathcal{F}^X_t = \mathcal{F}^\theta_t \quad \text{for all} \quad t \geq 0,
	\end{equation}
	where $\FF^\theta = (\mathcal{F}^\theta_t)_{t \geq 0}$ is the filtration generated by $\theta$.
	
	We have seen in \Cref{prop:angle-cbm} that $(\theta_t)_{t > 0}$ is a regular time-change of a circular Brownian motion. Propositions 2 and 3 of \cite{azema_remark_1999} together immediately imply that the natural filtration of any regular time-change of a circular Brownian motion is Brownian.
	Hence $\FF^\theta$ is Brownian, and it follows that $\FF^X$ is Brownian.
\end{proof}

In the next section, we will show that the SDE \eqref{eq:SDE-gamma} has no strong solution, and so the Brownian motion that generates the natural filtration of a weak solution cannot be the driving Brownian motion of the SDE.

\subsection{Non-existence of strong solutions}\label{sec:non-existence}
The proof of non-existence of a strong solution in \Cref{thm:no-strong-solution} relies on the following property of the angle process that arises from the theory of circular Brownian motion discussed in \Cref{sec:properties-weak}.

\begin{lemma}\label[lemma]{lem:cbm-not-adapted}
	Let $W$ be a real-valued Brownian motion with natural filtration $(\mathcal{F}^W_t)_{t \geq 0}$ and let $\phi$ be an $\RR/2\pi\ZZ$-valued process. Suppose that $\phi$ satisfies
	\begin{equation}
		\int_s^t \D \phi_r = \int_s^t r^{-\frac{1}{2}} \D W_r	, \quad \text{for all} \quad 0 < s \leq t,
	\end{equation}
	where the random variables on the left-hand side are defined analogously to those in \Cref{def:cbm}.
	
	Then $\phi$ cannot be adapted to $(\mathcal{F}^W_t)_{t \geq 0}$.
\end{lemma}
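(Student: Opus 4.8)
The plan is to argue by contradiction, mirroring the structure of the Tanaka and Tsirelson arguments: if $\phi$ were adapted to $(\mathcal{F}^W_t)_{t\geq 0}$, I would derive a statement about the initial sigma-algebra that contradicts the blowup of $\phi$ near time $0$. First I would pass, as in the proof of \Cref{prop:angle-cbm}, to the exponential time-change $a(t) = e^t$, setting $\tilde\phi_t = \phi_{e^t}$ and $\tilde W_t = \int_{0}^{e^t} r^{-1/2}\,\D W_r$ (made sense of on $[s,\infty)$ for each $s$ as an increment), so that $\tilde\phi$ is a circular Brownian motion for the time-changed filtration $\tilde{\mathcal F}_t = \mathcal F^W_{e^t}$ in the sense of \Cref{def:cbm}. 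The hypothesis that $\phi$ is $\mathcal F^W$-adapted translates into $\tilde\phi$ being adapted to $(\tilde{\mathcal F}_t)_{t\in\RR}$, and the innovation filtration $\mathcal H$ of $\tilde\phi$ (generated by its increments) is contained in $\bigvee_{t}\tilde{\mathcal F}_t = \mathcal F^W_\infty$, since each increment $\tilde\phi_u - \tilde\phi_r = \int_{e^r}^{e^u} v^{-1/2}\,\D W_v$ is $\mathcal F^W_\infty$-measurable.

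The key input is Proposition 1 of \cite{azema_remark_1999}, recalled in \Cref{sec:properties-weak}: for a circular Brownian motion, $\tilde\phi_t$ is uniformly distributed on $\RR/2\pi\ZZ$ and independent of its innovation sigma-algebra $\mathcal H_\infty$. Now I would exploit the fact that $W$ restricted to $[e^s,\infty)$ together with its increments on $(0,e^s]$ can be recovered measurably from the increments of $\tilde\phi$: indeed, $\D W_r = r^{1/2}\,\D\bigl(\int_{e^s}^{r}\D\phi\bigr)$ pathwise for $r > e^s$, and letting $s\to -\infty$ recovers all of $W$ from $\mathcal H_\infty$. Hence $\mathcal F^W_\infty \subseteq \mathcal H_\infty$, and combined with the previous paragraph, $\mathcal F^W_\infty = \mathcal H_\infty$ up to null sets. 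But $\tilde\phi_0 = \phi_1$ is $\tilde{\mathcal F}_0$-measurable, hence $\mathcal F^W_\infty$-measurable, hence $\mathcal H_\infty$-measurable — contradicting that $\tilde\phi_0$ is independent of $\mathcal H_\infty$ and nondegenerate (uniform on a set of positive measure).

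The step I expect to be the main obstacle is making the pathwise recovery of $W$ from the increments of $\tilde\phi$ rigorous, i.e. justifying that $\int_{e^s}^{\cdot}\D\phi_r$ really is a Brownian motion whose stochastic integral against $r^{1/2}$ returns $W$ on $[e^s,\infty)$ — this requires checking that the circular Brownian motion increment process coincides (not merely in law) with $\int_{e^s}^{\cdot} r^{-1/2}\D W_r$, which follows from the defining relation in the hypothesis and the fact that the $\RR$-valued lift in \Cref{def:cbm} is characterized by continuity and the prescribed jumps mod $2\pi$. An alternative, slightly softer route that avoids recovering all of $W$: fix any $t>0$; since $\phi_t$ is $\mathcal F^W_t$-measurable by assumption and $\phi_t - \phi_s = \int_s^t r^{-1/2}\D W_r$ is $\mathcal F^W_t$-measurable, the value $\phi_s$ is $\mathcal F^W_t$-measurable for every $s\in(0,t]$; letting $s\downarrow 0$ and using continuity of $\tilde\phi$ after time-change shows $\tilde\phi$ has a nontrivial $\tilde{\mathcal F}_{-\infty} = \bigcap_s \mathcal F^W_{e^s}$-measurable "germ", whereas for a Brownian filtration $\bigcap_{s}\mathcal F^W_{s}$ is trivial by Blumenthal's zero–one law, and one checks the circular Brownian motion structure forces $\phi_{0+}$ to be genuinely random — again a contradiction. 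I would present the first argument as the main line, since it ties most directly to the already-cited properties of circular Brownian motion.
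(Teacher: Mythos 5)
Your proposal is correct and follows essentially the same route as the paper's proof: exponential time-change to a circular Brownian motion, Proposition 1 of \cite{azema_remark_1999} for uniformity and independence of $\tilde\phi_t$ from the innovation sigma-algebra, recovery of $W$ from the increments of $\tilde\phi$ via the inversion $\D W_r = r^{1/2}\,\D\phi_r$ together with $W_{e^s}\to 0$ as $s\to-\infty$, and the resulting collapse of filtrations. The only (cosmetic) difference is that you run the contradiction at $\infty$ by showing $\mathcal{F}^W_\infty \subseteq \mathcal{H}_\infty$ and invoking the nondegeneracy of $\tilde\phi_0$, whereas the paper keeps a finite time $t$ and contradicts the strict inclusion $\mathcal{H}_t \subsetneq \tilde{\mathcal{F}}^\phi_t \subseteq \tilde{\mathcal{F}}^W_t$ by showing $\tilde{\mathcal{F}}^W_t \subseteq \mathcal{H}_t$.
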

\begin{proof}
	Suppose for contradiction that $\phi$ is adapted to the natural filtration of $W$.
	
	Define the regular time-change $a: \RR \to (0, \infty)$ by $a(t) = e^t$ for all $t \in \RR$, as in the proof of \Cref{prop:angle-cbm}, and denote the time-changed processes
	\begin{equation}
	\begin{split}
		(\tilde{\phi}_t)_{t \in \RR} & = (\phi_{a(t)})_{t > 0},\\
		(\tilde{W}_t)_{t \in \RR} & = (W_{a(t)})_{t > 0}.
	\end{split}
	\end{equation}
	Since the time-change is deterministic, the natural filtrations $(\tilde{\mathcal{F}}^\phi_t)_{t \in \RR}$ and $(\tilde{\mathcal{F}}^W_t)_{t \in \RR}$ of the time-changed processes $\tilde{\phi}$ and $\tilde{W}$ are given by
	\begin{equation}
		\tilde{\mathcal{F}}^\phi_t = \mathcal{F}^\phi_{a(t)}, \quad \tilde{\mathcal{F}}^W_t = \mathcal{F}^W_{a(t)}, \quad \text{for all} \quad t \in \RR.
	\end{equation}
	Hence $\tilde{\phi}$ is adapted to $(\tilde{\mathcal{F}}^W_t)_{t \in \RR}$.
	
	By the same arugments as in the proof of \Cref{prop:angle-cbm}, $\tilde{\phi}$ is a circular Brownian motion for $(\tilde{\mathcal{F}}^W_t)_{t \in \RR}$ and, for any $s, t \in \RR$ with $s \leq t$,
	\begin{equation}\label{eq:cbm-integral}
		\int_s^t \D \tilde{\phi}_r = \int_s^t a(r)^{-\frac{1}{2}} \D \tilde{W}_r.
	\end{equation}
			
	To arrive at a contradiction, we will exploit a property of circular Brownian motion that is proved in Proposition 1 of \cite{azema_remark_1999}.
	
	Let $(\mathcal{H}_t)_{t \in \RR}$ be the innovation filtration of $\tilde{\phi}$. Recall that, for each $t \in \RR$, $\mathcal{H}_t$ is the sigma-algebra generated by the increments of $\tilde{\phi}$ up to time $t$; i.e.
	\begin{equation}
		\mathcal{H}_t \coloneqq  \sigma \left(\left\{\tilde{\phi}_s - \tilde{\phi}_r \colon -\infty < r \leq s \leq t\right\}\right).
	\end{equation}
	Then we have
	\begin{equation}
		\mathcal{H}_t \subseteq \tilde{\mathcal{F}}^\phi_t \subseteq \tilde{\mathcal{F}}^W_t, \quad t \in \RR.
	\end{equation}
	
	In fact, the first inclusion must be strict, as we now show. As remarked in \Cref{sec:properties-weak}, Proposition 1 of \cite{azema_remark_1999} tells us that, for each $t \in \RR$, the value of the circular Brownian motion $\tilde{\phi}_t$ is uniformly distributed on $[0, 2\pi)$ and, moreover, $\tilde{\phi}_t$ is independent of $\mathcal{H}_\infty$. Hence, for each $t \in \RR$,
	\begin{equation}\label{eq:strict-inclusion}
		\mathcal{H}_t \subsetneq \tilde{\mathcal{F}}^\phi_t \subseteq \tilde{\mathcal{F}}^W_t.
	\end{equation}
			
	Fix $0 < s \leq t$. Then, using the relation \eqref{eq:cbm-integral}, we can deduce that the increment
	\begin{equation}
		\tilde{W}_t - \tilde{W}_s = \int_s^t a(r)^\frac{1}{2} \D \tilde{\phi}_r
	\end{equation}
	is $\mathcal{H}_t$-measurable.
	
	Now, taking the limit as $s \to - \infty$, $\tilde{W}_s = W_{e^s} \to 0$ almost surely, and so $\tilde{W}_t$ is $\mathcal{H}_t$-measurable. This implies that
	\begin{equation}
		\tilde{\mathcal{F}}^W_t \subseteq \mathcal{H}_t,
	\end{equation}
	contradicting the strict inclusion in \eqref{eq:strict-inclusion}.
\end{proof}

We are now ready to prove \Cref{thm:no-strong-solution}, showing in particular that the SDE \eqref{eq:SDE-gamma} has no strong solution.

\begin{proof}[Proof of \Cref{thm:no-strong-solution}]
	As noted after the statement of the theorem, the existence of a weak solution is proved by Larsson and Ruf in Theorem 4.3 of \cite{larsson_relative_2021}. We prove uniqueness in law in \Cref{cor:uniqueness}. By \Cref{cor:brownian-filtration}, a weak solution generates a Brownian filtration, and by Blumenthal's zero-one law, the initial sigma-algebra is trivial. The uniform distribution of the time-changed angle process and its independence from its increments are shown in \Cref{prop:angle-cbm}. It remains to prove that statements \Cref{it:S3,it:S4} of the theorem hold, and that there does not exist a strong solution. We now check statement \Cref{it:S3} of the theorem.
	
	Let $((X, B), \psp, \FF))$ be a weak solution of \eqref{eq:SDE-gamma}, write $\theta$ for the angle process, and recall that $\mathbb{F}^X = \mathbb{F}^\theta$. We are in the setting of \Cref{lem:cbm-not-adapted} and so, similarly to the proof of the lemma, we find that $\tilde{\mathcal{F}}^B_t \subseteq \tilde{\mathcal{F}}^\theta_t$. Clearly, for any $s \leq t$, $\sigma(\tilde{\theta}_s) \subseteq \tilde{\mathcal{F}}^\theta_t$, and so we also have the inclusion $\tilde{\mathcal{F}}^B_t \vee \sigma(\tilde{\theta}_s) \subseteq \tilde{\mathcal{F}}^\theta_t$. On the other hand, writing
	\begin{equation}
		\tilde{\theta}_t = \tilde{\theta}_s + \int_s^t e^{- \frac{r}{2}} \D \tilde{B}_r,
	\end{equation}
	we see that $\tilde{\mathcal{F}}^\theta_t \subseteq \tilde{\mathcal{F}}^B_t \vee \sigma(\tilde{\theta}_s)$, and we conclude that $\tilde{\mathcal{F}}^X_t = \tilde{\mathcal{F}}^B_t \vee \sigma(\tilde{\theta}_s)$.
	
	To verify the strong Markov property, statement \Cref{it:S4} of the theorem, we first observe that the Markov property at time zero together with the strong Markov property on $[\varepsilon, \infty)$ for all $\varepsilon > 0$ implies the strong Markov property on $[0, \infty)$ (see e.g.\ \cite[Lemma A.2]{PaRoSc22}). Fix $\varepsilon > 0$. By \Cref{lem:radius-angle}, the radius of the weak solution is given by the deterministically increasing function $|X_t| = \sqrt t$, and so we have $|X_t| \geq \sqrt \varepsilon$ for $t \in [\varepsilon, \infty)$. The diffusion coefficient $x = (x_1, x_2)^\top \mapsto |x|^{-1}(-x_2, x_1)^\top$ of the SDE \eqref{eq:SDE-gamma} is Lipschitz on the set $\{x \in \RR^2 \colon \; |x| \geq \sqrt \varepsilon\}$, and so we can follow standard arguments  (c.f.\ \cite[Theorem 8.3]{le_gall_brownian_2016}) to show that the weak solution is in fact strong on $[\varepsilon, \infty)$. This implies that the strong Markov property holds on $[\varepsilon, \infty)$, by \cite[Corollary 8.8]{le_gall_brownian_2016}. Moreover, the Markov property at time zero follows immediately from the fact that the initial sigma-algebra is trivial. We conclude that $(X_t)_{t \geq 0}$ is a strong Markov process.
	
	To conclude the proof of \Cref{thm:no-strong-solution} it remains to show non-existence of strong solutions.
	Suppose for contradiction that $X$ is a strong solution of the SDE \eqref{eq:SDE-gamma}. Then $X$ is adapted to the filtration $(\mathcal{F}^B_t)_{t \geq 0}$; i.e
	\begin{equation}\label{eq:X-adapted}
		\mathcal{F}^X_t \subseteq \mathcal{F}^B_t, \quad t \geq 0.
	\end{equation}
	Then, since the angle process $\theta$ satisfies \eqref{eq:angle-sde}, we have
	\begin{equation}
		\int_s^t \D\theta_r = \int_s^t r^{-\frac{1}{2}} \D B_r,
	\end{equation}
	for any $0 < s \leq t$. Therefore, by \Cref{lem:cbm-not-adapted}, $\theta$ is not adapted to $(\mathcal{F}^B_t)_{t \geq 0}$.
	We have already seen in the proof of \Cref{cor:brownian-filtration} that
	\begin{equation}
		\mathcal{F}^\theta_t = \mathcal{F}^X_t, \quad \text{for all} \quad t \geq 0.
	\end{equation}
	Therefore $X$ is not adapted to $(\mathcal{F}^B_t)_{t \geq 0}$. This contradicts the inclusion \eqref{eq:X-adapted}. Hence the SDE \eqref{eq:SDE-gamma} has no strong solution.
\end{proof}
	
\begin{remark}
	Recalling \Cref{cor:brownian-filtration}, we have shown that, although the SDE \eqref{eq:SDE-gamma} has no strong solution, there exists a unique (in law) weak solution, which generates a Brownian filtration. As discussed in \Cref{sec:brownian-filtrations-CBM}, this places our example into the more common class of SDEs whose weak solutions are not strong but do generate a Brownian filtration, as is the case for the examples of Tanaka and Tsirelson.
\end{remark}

\section{Approximating SDEs have no strong solution}\label{sec:approx-sde}

In this section, we consider a class of SDEs whose behaviour approximates that of the SDE \eqref{eq:SDE-gamma} studied in \Cref{sec:sde-no-strong-solution}. We show that such SDEs exhibit similar properites to the SDE \eqref{eq:SDE-gamma} and that there do not exist strong solutions. In \Cref{sec:control} we will relate these SDEs and the SDE \eqref{eq:SDE-gamma} to a control problem for two-dimensional martingales that is studied in \cite{cox_optimal_2021}. In particular, as remarked after \Cref{prop:lambda-approx-cost}, the non-existence of strong solutions gives an insight into the problem of optimising over feedback controls ---  see \Cref{sec:feedback}. The main result of this section is the following restatement of \Cref{thm:intro-2}.

\begin{theorem}\label{thm:approx-no-strong}
	Let $B$ be a one-dimensional Brownian motion and let $\lambda \in (0,1)$ be a fixed constant. Then there exists no strong solution of the SDE
	\begin{equation}\label{eq:SDE-approx}
		\D X_t = \frac{1}{\abs{X_t}}
		\begin{bmatrix}
			\lambda X^1_t - \sqrt{1 - \lambda^2} X^2_t\\
			\lambda X^2_t + \sqrt{1 - \lambda^2} X^1_t
		\end{bmatrix}
		\D B_t;
		\quad X_0 \sim \delta_0.
	\end{equation}
	Uniqueness in law holds for \eqref{eq:SDE-approx} on the time interval $[0, \tau^\lambda_0)$, where $\tau^\lambda_0 \coloneqq \inf\{t > 0 : X_t = 0 \}$.
\end{theorem}

Note that setting $\lambda = 0$ in \eqref{eq:SDE-approx} reduces the SDE to \eqref{eq:SDE-gamma}, and so we exclude this case here.

We first observe that the squared radius process of a solution of \eqref{eq:SDE-approx} can be rescaled to a squared Bessel process, as defined in Definition 1.1 of \cite[Chapter XI]{revuz_continuous_1999}. We will show that the event of returning to the origin before leaving the domain satisfies the following zero-one law. For $\lambda \leq \frac{\sqrt{2}}{2}$, $X^\lambda$ returns to the origin with probability zero; for $\lambda > \frac{\sqrt{2}}{2}$, $X^\lambda$ returns to the origin with probability one. The critical value $\lambda = \frac{\sqrt{2}}{2}$ corresponds to the $2$-dimensional squared Bessel process, which has the same law as the squared radius process of a $2$-dimensional Brownian motion. This remark plays an important role in the study of uniqueness of multi-dimensional martingales with given marginals in \cite{PaRoSc22}.
	
\begin{proposition}\label[proposition]{prop:lambda-return-origin}
	Let $\lambda \in (0, 1)$ and suppose that $X^\lambda$ solves the SDE \eqref{eq:SDE-approx}. Write $Z^\lambda_t = \abs{X^\lambda_t}^2$ for any $t \geq 0$ and define the rescaled process $\tilde{Z}^\lambda$ by $\tilde{Z}^\lambda_t = Z_{\lambda^{-2}t}$.
	
	Then $\tilde{Z}^\lambda$ is the square of a $\delta$-dimensional Bessel process started from $0$, where $\delta = \lambda^{-2}$.
	Moreover, defining $\tau^\lambda_0 \coloneqq  \inf\{t > 0 
	\colon Z^\lambda_t = 0\}$, we have
	\begin{equation}
		\PP^0\left[\tau^\lambda_0 < \infty\right] =
		\begin{cases}
			0, & \lambda \in (0, \frac{\sqrt{2}}{2}],\\
			1, & \lambda \in (\frac{\sqrt{2}}{2}, 1).
		\end{cases}
	\end{equation}
\end{proposition}

\begin{proof}
	Applying It\^o's formula, we see that $Z^\lambda$ satisfies
	\begin{equation}
		\D Z^\lambda_t = 2 \lambda \sqrt{Z^\lambda_t} \D B_t + \D t,
	\end{equation}
	with $Z^\lambda_0 = 0$. Note that
	\begin{equation}
		t \mapsto \tilde{B}_t \coloneqq  \lambda B_{\lambda^{-2}t}
	\end{equation}
	is a standard Brownian motion. Therefore, for any $t \geq 0$,
	\begin{equation}
		\tilde{Z}^\lambda_t = 2 \int_0^t \sqrt{\tilde{Z}^\lambda_s}\D \tilde{B}_s + \lambda^{-2}t.
	\end{equation}
	Set $\delta = \lambda^{-2}$. Then, referring to Definition 1.1 of \cite[Chapter XI]{revuz_continuous_1999}, we see that $\tilde{Z}^\lambda$ is the square of a $\delta$-dimensional Bessel process.
	
	Now suppose that $\lambda \in (0, \frac{\sqrt{2}}{{2}}]$, so that
	\begin{equation}
		\delta = \lambda^{-2} \geq 2.
	\end{equation}
	The discussion that immediately precedes Proposition 1.5 in \cite[Chapter XI]{revuz_continuous_1999} tells us that the set $\{0\}$ is \emph{polar} for $\tilde{Z}^\lambda$. By the definition of a polar set given in Definition 2.6 of \cite[Chapter V]{revuz_continuous_1999}, we have that $\tilde{Z}^\lambda$ almost surely never returns to the origin in finite time, and the rescaled process $Z^\lambda$ has the same property.
	
	On the other hand, suppose that $\lambda \in (\frac{\sqrt{2}}{2}, 1)$. Then
	\begin{equation}
		\delta = \lambda^{-2} < 2,
	\end{equation}
	and so, by the same discussion in \cite[Chapter XI]{revuz_continuous_1999}, $\tilde{Z}^\lambda$ returns to the origin in finite time with probability $1$. Again the rescaled process $Z^\lambda$ has the same property.
\end{proof}

\begin{remark}\label[remark]{rem:lambda-unique-strong}
	Define the process $R^\lambda$ by $R^\lambda_t = \abs{X_t^\lambda}$, for $t \geq 0$.
	Since $\tilde Z^\lambda$ is the square of a $\lambda^{-2}$-dimensional Bessel process, we have that $t \mapsto \sqrt{\tilde Z^\lambda_t}$ is a Bessel process (see \cite[Definition XI.1.9]{revuz_continuous_1999}). Rescaling the SDE for the Bessel process (see \cite[Eq.\ (4)]{cherny_strong_2000}), we see that $R^\lambda$ satisfies
	\begin{equation}\label{eq:R-lambda-sde}
		\D R^\lambda_t = \lambda \D B_t + \frac{1 - \lambda^2}{2 R^\lambda_t} \mathsf{1}_{\{R^\lambda_t \neq 0\}} \D t; \quad R^\lambda_0 = 0.
	\end{equation}
	By \cite[Theorem 3.2 (i)]{cherny_strong_2000}, $R^\lambda$ is the unique non-negative solution of \eqref{eq:R-lambda-sde} and it is a strong solution.
	
	Suppose that $\lambda \in (0, \frac{\sqrt{2}}{2}]$. By \Cref{prop:lambda-return-origin}, $Z^\lambda$ almost surely never returns to the origin after time $0$, and so pathwise uniqueness holds after time $0$, by \cite[Theorem 3.2 (ii)]{cherny_strong_2000}. On the other hand, suppose that $\lambda \in (\frac{\sqrt 2}{2}, 1)$. Then \cite[Theorem 3.2 (iii)]{cherny_strong_2000} shows that even uniqueness in law does not hold, and by \Cref{prop:lambda-return-origin} $Z^\lambda$ returns to the origin almost surely in finite time. Inspecting the proof of \cite[Theorem 3.2]{cherny_strong_2000}, we see that pathwise uniqueness for \eqref{eq:R-lambda-sde} holds after time $0$ up to the first hitting time of the origin. We will therefore only consider the SDE \eqref{eq:R-lambda-sde} up to the hitting time $\tau^\lambda_0$, as defined in \Cref{prop:lambda-return-origin}, in this case.
	
	Fix $\lambda \in (0, 1)$ and consider the angle process $\theta$, where we now omit the index $\lambda$ from our notation. By It\^o's formula, we calculate that $\theta$ satisfies
	\begin{equation}\label{eq:theta-lambda-sde}
		\D \theta_t = \sqrt{1 - \lambda^2} R^{-1}_t \D W_t - \lambda\sqrt{1 - \lambda^2} R^{-2}_t \D t, \quad t \in (0, \tau_0).
	\end{equation}
	Hence, given the value of $\theta_{\tau_\rho}$, for some $\rho > 0$ with $\tau_\rho < \tau_0$, the path of $(\theta_t)_{t \in (0, \tau_0)}$ is uniquely determined by \eqref{eq:theta-lambda-sde}. This observation will lead us to prove uniqueness in law for \eqref{eq:SDE-approx}.
\end{remark}	

We now turn to the proof of \Cref{thm:approx-no-strong}, where we show that there do not exist strong solutions of \eqref{eq:SDE-approx}, following a similar strategy to the proof of \Cref{thm:no-strong-solution}. Here, the angle process of a solution of \eqref{eq:SDE-approx} is no longer a circular Brownian motion, as was the case for solutions of \eqref{eq:SDE-gamma} in \Cref{prop:angle-cbm}. However, this process does have similar properties. We will show that, conditioned on the value of the radius, the angle process is uniformly distributed and independent of its increments. Here, we adapt \'Emery and Schachermayer's proof that the value of a circular Brownian motion at any time is uniformly distributed and independent of its increments, from Proposition 1 of \cite{azema_remark_1999}. We will deduce the result of \Cref{thm:approx-no-strong} from the following proposition.
	
\begin{proposition}\label[proposition]{prop:approx-unif-indep}
	Fix $\lambda \in (0, 1)$. For any weak solution $\left((X, W), \psp, \FF\right)$ of the SDE \eqref{eq:SDE-approx}, let $R$ be the radius process and $\theta$ the angle process, so that we can write
	\begin{equation}
		X_t = R_t
		\begin{bmatrix}
			\cos \theta_t\\
			\sin \theta_t
		\end{bmatrix}, \quad t > 0.
	\end{equation}
	Denote the hitting times of $R$ by
	\begin{equation}
		\tau_\rho \coloneqq  \inf\{t > 0 \colon R_t = \rho\}, \quad \rho \geq 0.
	\end{equation}
	Then, for any $\rho > 0$ with $\tau_\rho < \tau_0$,
	\begin{equation}
		\theta_{\tau_\rho} \sim \unif [0, 2\pi).
	\end{equation}
	Moreover, $\theta_{\tau_\rho}$ is independent of
	\begin{equation}
		\mathcal{H}_\infty \coloneqq  \sigma\left(\left\{\theta_t - \theta_s \colon 0 < s < t < \tau_0\right\}\right).
	\end{equation}
\end{proposition}

The above result relies in turn on the following technical lemma, which guarantees that the increments of the angle process at the hitting times of the radius process do not have a lattice distribution.
		
\begin{lemma}\label[lemma]{lem:coupling}
	Let $\theta$ be the angle process defined in \Cref{prop:approx-unif-indep} and fix $\rho > 0$ such that $\tau_\rho < \tau_0$. Then, for any $\phi \in [0, 2\pi)$,
	\begin{equation}\label{eq:non-arithmetic}
		\PP \left[\left(\theta_{\tau_{\rho}} - \theta_{\tau_{2^{-1}\rho}}\right) \in \left\{\phi + 2\pi m, \quad m \in \ZZ\right\}\right] < 1.
	\end{equation}
\end{lemma}

\begin{proof}
	Suppose for contradiction that there exists $\phi \in [0, 2 \pi)$ such that
	\begin{equation}\label{eq:arithmetic-distribution}
		\PP \left[\left(\theta_{\tau_{\rho}} - \theta_{\tau_{2^{-1}\rho}}\right) \in \left\{\phi + 2\pi m, \quad m \in \ZZ\right\}\right] = 1.
	\end{equation}
	Let $R$ be the radius process and $\theta$ the angle process, as defined in \Cref{prop:approx-unif-indep}, and recall that $R$ and $\theta$ satisfy the SDEs \eqref{eq:R-lambda-sde} and \eqref{eq:theta-lambda-sde}, respectively. 
	
	We will use a coupling argument to arrive at a contradiction. Consider two independent weak solutions $(R^1, \theta^1)$, $(R^2, \theta^2)$ of the SDEs \eqref{eq:R-lambda-sde} and \eqref{eq:theta-lambda-sde} on a common probability space. For $i = 1, 2$ and any $r \geq 0$,  denote the hitting time
	\begin{equation}
		\tau^i_r \coloneqq  \inf\{t > 0 \colon R^i_t = r\}.
	\end{equation}
	 Note that, as we observed in \Cref{rem:lambda-unique-strong}, given the value of $\theta$ at radius $2^{-1}\rho$, the process $\theta$ is uniquely defined via the SDE \eqref{eq:theta-lambda-sde} up to the first return to the origin.
	
	Fix $\psi^1, \psi^2 
	\in [0, 2\pi)$ such that $\psi^1 \not\equiv \psi^2 \mod 2\pi$, and shift $\theta^1$ and $\theta^2$ to define
	\begin{equation}
		\theta^{\psi^1}_t \coloneqq  \theta^1_t + \psi^1 -\theta^1_{\tau^1_{2^{-1}\rho}} \qandq \theta^{\psi^2}_t \coloneqq  \theta^2_t + \psi^2- \theta^2_{\tau^2_{2^{-1}\rho}}.
	\end{equation}
	Then, at the first hitting time of radius $2^{-1}\rho$, the values of the processes $\theta^{\psi^1}$ and $\theta^{\psi^2}$ are almost surely equal to $\psi^1$ and $\psi^2$, respectively, and these shifted processes still satisfy the SDE \eqref{eq:theta-lambda-sde}.
	
	Suppose that there exists some radius $\eta \in (2^{-1}\rho, \rho)$ such that
	\begin{equation}
		\theta^{
		\psi^1}_{\tau^1_\eta} = \theta^{\psi^2}_{\tau^2_\eta}.
	\end{equation}
	Then we can couple the two processes $\theta^{\psi^1}, \theta^{\psi^2}$ as follows. Define $\tilde{\theta}$ by
	\begin{equation}
		\tilde{\theta}_t =
		\begin{cases}
			\theta^{\psi^2}_t, & t < \tau^2_\eta,\\
			\theta^{\psi^1}_{\tau^1_\eta - \tau^2_\eta + t}, & t \geq \tau^2_\eta.
		\end{cases}
	\end{equation}
	Then we see that the trajectories of $(R^1, \theta^{\psi^1})$ and $(R^2, \tilde{\theta})$ coincide on the set $(\eta, R) \times [0, 2\pi)$. Moreover, by the Markov property, the process $\tilde{\theta}$ still satisfies the SDE \eqref{eq:theta-lambda-sde}. Therefore, by condition \eqref{eq:arithmetic-distribution},
	\begin{equation}
	\begin{split}
		\theta^{\psi^1}_{\tau_{\rho}} & \equiv \psi^1 + \phi \mod 2\pi,\\
		\tilde{\theta}_{\tau_{\rho}} & \equiv \psi^2 + \phi \mod 2\pi.
	\end{split}
	\end{equation}
	But, by our choice of $\psi^1, \psi^2$, the above values are not equal, contradicting the coupling of the trajectories. This shows that, on the set $(2^{-1}\rho, \rho) \times [0, 2\pi)$, the supports of $(R^1, \theta^{\psi^1})$ and $(R^2, \theta^{\psi^2})$ must be disjoint.
	
	Since our choice of the shifts $\psi^1$ and $\psi^2$ was arbitrary, the only feasible supports of $(R^i, \theta^{\psi^i})$ are the rays connecting the points $(2^{-1}\rho, \psi^i)$ and $(\rho, \psi^i)$, for $i = 1, 2$. This would imply that $\theta^{\psi^1}$ and $\theta^{\psi^2}$ are deterministic, but this is not the case for $\lambda < 1$.
	
	Hence there is no $\phi \in [0, 2\pi)$ such that \eqref{eq:arithmetic-distribution} holds.
\end{proof}

We now use this lemma to prove \Cref{prop:approx-unif-indep} on the uniformity and independence properties of the angle process.

\begin{proof}[Proof of \Cref{prop:approx-unif-indep}]
	Recall that $R$ satisfies the SDE \eqref{eq:R-lambda-sde} and $\theta$ satisfies the SDE \eqref{eq:theta-lambda-sde}.
	
	Fix $\rho > 0$ such that $\tau_\rho < \tau_0$. We show that $\theta_{\tau_\rho}$ is uniformly distributed on $[0, 2\pi)$ by using the characteristic function of the random variable $\theta_{\tau_\rho}$ on the torus, following the proof of Proposition 1 of \cite{azema_remark_1999}.
	For any $\phi \in \RR / 2\pi \ZZ$ and $k \in \ZZ$, define the characteristic function
	\begin{equation}
		e_k(\phi) \coloneqq  \exp\{iky\}, \quad \text{for any} \quad y \in \RR \quad \text{such that} \quad y \equiv \phi \mod 2\pi.
	\end{equation}
	Fix $k \in \ZZ \setminus \{0\}$ and $\rho_1 > 0$ with $\tau_\rho < \tau_0$. We aim to show that $\EE[e_k(\theta_{\tau_{\rho_1}})] = 0$.
	
	Let $\rho_0 \in (0, \rho_1)$. Then, writing
	\begin{equation}
		\theta_{\tau_{\rho_1}} = \theta_{\tau_{\rho_0}} + \int_{\tau_{\rho_0}}^{\tau_{\rho_1}}\D \theta_s,
	\end{equation}
	and denoting by $\EE$ the expectation with respect to the given probability measure $\PP$, we have
	\begin{equation}
	\begin{split}
		\abs{\EE \left[e_k(\theta_{\tau_{\rho_1}})\right]} & = \abs{\EE\left[e_k(\theta_{\tau_{\rho_0}})e_k(\theta_{\tau_{\rho_1}} - \theta_{\tau_{\rho_0}})\right]}.
	\end{split}
	\end{equation}
	In order to break up the expectation on the right hand side into the product of expectations, we use the following conditional independence. We see that future increments of $\theta$ depend only on the history of $\theta$ through the current value of $R$, since $R$ is Markovian. That is, for any $s < u < v$,
	\begin{equation}
		\theta_v - \theta_u \quad \text{conditioned on} \quad \sigma(R_s) \quad \text{is independent of} \quad \mathcal{F}^\theta_s.
	\end{equation}
	Now note that, taking $s = \tau_{\rho_0}$, the $\sigma$-algebra $\sigma(R_{\tau_{\rho_o}})$ is trivial, 	and so future increments of $\theta$ are independent of $\mathcal{F}^\theta_{\tau_{\rho_1}}$, without any conditioning. Hence
	\begin{equation}\label{eq:char-fn-indep}
		\EE\left[e_k(\theta_{\tau_{\rho_0}})e_k(\theta_{\tau_{\rho_1}} - \theta_{\tau_{\rho_0}})\right] = \EE\left[e_k(\theta_{\tau_{\rho_0}})\right]\EE\left[e_k(\theta_{\tau_{\rho_1}} - \theta_{\tau_{\rho_0}})\right].
	\end{equation}
			
	We will now consider the increment $\theta_{\tau_{\rho_1}} - \theta_{\tau_{\rho_0}}$. We claim that, for small radii $\rho_0$, the value of this increment approaches a uniform distribution on $[0, 2\pi)$. We show this by using a scaling argument, as follows.
	
	Fix $\alpha > 0$ and rescale time by defining $s \coloneqq  \alpha t$ for $t \geq 0$. Then, for $t \geq 0$, define
	\begin{equation}
		\tilde{B}^\alpha_s \coloneqq  \alpha^\frac{1}{2} B_t, \quad \tilde{R}^\alpha_s \coloneqq  \alpha^\frac{1}{2} R_t, \quad \tilde{\theta}^\alpha_s \coloneqq  \theta_t,
	\end{equation}
	so that
	\begin{equation}
		\D s = \alpha \D t, \quad \text{and} \quad \D \tilde{B}^\alpha_s = \alpha^\frac{1}{2}\D B_t.
	\end{equation}
	We can calculate
	\begin{equation}\label{eq:R-rescaled}
	\begin{split}
		\D \tilde{R}^\alpha_s & = \alpha^\frac{1}{2}\left(\lambda \D B_t + \frac{1 - \lambda^2}{2 R_t} \D t\right)\\
		& = \alpha^\frac{1}{2} \left(\lambda \alpha^{- \frac{1}{2}}\D \tilde{B}^\alpha_s + \frac{1 - \lambda^2}{2\alpha^{-\frac{1}{2}}\tilde{R}^\alpha_s}\alpha^{-1}\D s\right)\\
		& = \lambda \D \tilde{B}^\alpha_s + \frac{1 - \lambda^2}{2\tilde{R}^\alpha_s}\D s,
	\end{split}
	\end{equation}
	and
	\begin{equation}\label{eq:theta-rescaled}
	\begin{split}
		\D \tilde{\theta}^\alpha_s & = \sqrt{1 - \lambda^2}R_t^{-1}\D B_t - \lambda \sqrt{1 - \lambda^2}R_t^{-2} \D t\\
		& = \sqrt{1 - \lambda^2} \left(\alpha^{-\frac{1}{2}}\tilde{R}^\alpha_s\right)^{-1}\alpha^{-\frac{1}{2}} \D \tilde{B}^\alpha_s - \lambda \sqrt{1 - \lambda^2}\left(\alpha^{-\frac{1}{2}}\tilde{R}^\alpha_s\right)^{-2}\alpha^{-1} \D s\\
		& = \sqrt{1 - \lambda^2}\left(\tilde{R}^\alpha_s\right)^{-1} \D \tilde{B}^\alpha_s - \lambda\sqrt{1 - \lambda^2} \left(\tilde{R}^\alpha_s\right)^{-2} \D s.
	\end{split}
	\end{equation}
	And so, after this rescaling, $(\tilde{R}^\alpha, \tilde{B}^\alpha)$ and $(\tilde{\theta}^\alpha, \tilde{B}^\alpha)$ satisfy the same SDEs \eqref{eq:R-lambda-sde} and \eqref{eq:theta-lambda-sde} as $(R, B)$ and $(\theta, B)$.
	
	For $i = 0, 1$, let $\tilde{\tau}^0_{\rho_i}$ be the first time that the process $\tilde{R}^\alpha_s$ hits $\rho_i$, having started from the origin. Then we have the following equality in distribution:
	\begin{equation}
		\theta_{\tau_{\rho_1}} - \theta_{\tau_{\rho_0}} = \tilde{\theta}^\alpha_{\tilde{\tau}^0_{\sqrt{\alpha}\rho_1}} - \tilde{\theta}^\alpha_{\tilde{\tau}^0_{\sqrt{\alpha}\rho_0}} = \theta_{\tau_{\sqrt{\alpha}\rho_1}} - \theta_{\tau_{\sqrt{\alpha}\rho_0}},
	\end{equation}
	where the first equality holds pointwise by rescaling, and the second equality holds in distribution because the rescaled processes satisfy the same SDEs as the original processes.
	
	Moreover, recalling our observation that increments of $\theta$ between hitting times of $R$ are independent, we see that the increments 
	\begin{equation}
		\theta_{\tau_{\rho_1}} - \theta_{\tau_{\rho_0}} \qandq \theta_{\tau_{\sqrt{\alpha}\rho_1}} - \theta_{\tau_{\sqrt{\alpha}\rho_0}}
	\end{equation}
	are independent and identically distributed when $\sqrt{\alpha} \rho_1 \leq \rho_0$.
	
	Now let $N \in \NN$ and set $\rho_0 = 2^{-N} \rho_1$. We can write the increment of $\theta$ as a sum of i.i.d. random variables
	\begin{equation}
		\theta_{\tau_{\rho_1}} - \theta_{\tau_{\rho_0}} = \sum_{k = 0}^{N - 1}\left(\theta_{\tau_{2^{-k}\rho_1}} - \theta_{\tau_{2^{-k + 1}\rho_1}}\right),
	\end{equation}
	and so
	\begin{equation}
		\abs{\EE\left[e_k\left(\theta_{\tau_{\rho_1}} - \theta_{\tau_{\rho_0}}\right)\right]} = \abs{\EE\left[e_k\left(\theta_{\tau_{\rho_1}} - \theta_{\tau_{2^{-1}\rho_1}}\right)\right]}^N.
	\end{equation}
	By Jensen's inequality, 
	\begin{equation}\label{eq:jensen-char-fn}
		\abs{\EE\left[e_k\left(\theta_{\tau_{\rho_1}} - \theta_{\tau_{2^{-1}\rho_1}}\right)\right]}^2 \leq 1,
	\end{equation}
	with equality if and only if there exists $\phi \in [0, 2\pi)$ such that
	\begin{equation}
		\PP \left[\left(\theta_{\tau_{\rho_1}} - \theta_{\tau_{2^{-1}\rho_{1}}}\right) \in \left\{\phi + 2\pi m, \quad m \in \ZZ\right\}\right] = 1.
	\end{equation}
	By \Cref{lem:coupling}, no such $\phi$ exists, and so the inequality \eqref{eq:jensen-char-fn} is strict. We then have that
	\begin{equation}
		\abs{\EE\left[e_k\left(\theta_{\tau_{\rho_1}} - \theta_{\tau_{\rho_0}}\right)\right]} = \abs{\EE\left[e_k\left(\theta_{\tau_{\rho_1}} - \theta_{\tau_{2^{-1}\rho_1}}\right)\right]}^N \xrightarrow{N \to \infty} 0,
	\end{equation}

	Returning to our calculation of the characteristic function of $\theta_t$ in \eqref{eq:char-fn-indep}, we have
	\begin{equation}
	\begin{split}
		\abs{\EE \left[e_k(\theta_{\tau_{\rho_1}})\right]} & = \abs{\EE\left[e_k(\theta_{\tau_{\rho_0}})\right]\EE\left[e_k(\theta_{\tau_{\rho_1}} - \theta_{\tau_{\rho_0}})\right]}\\
		& \leq \abs{\EE\left[e_k(\theta_{\tau_{\rho_1}} - \theta_{\tau_{\rho_0}})\right]}\\
		& \xrightarrow{\rho_0 \downarrow 0} 0.
	\end{split}
	\end{equation}
	Hence $\theta_{\tau_{\rho_1}}$ is uniformly distributed on $[0, 2\pi)$.
	
	We now show that $\theta_{\tau_{\rho_1}}$ is independent of $\mathcal{H}_\infty$, the sigma-algebra generated by all increments of $\theta$.
	
	Let $\left(\rho_n\right)_{n \in \NN}$ be a strictly positive decreasing sequence with $\lim_{n \to \infty}\rho_n = 0$. For each $n \in \NN$, define
	\begin{equation}
		\mathcal{H}^n \coloneqq  \sigma\left(\left\{\theta_v - \theta_u \colon \tau_{\rho_n} \leq u \leq v\right\}\right),
	\end{equation}
	the sigma-algebra generated by all increments of $\theta$ after the first hitting time of $\rho_n$.
	
	Recalling that we are working with filtrations that satisfy the usual conditions, we have that $\mathcal{H}_\infty = \bigvee_{n \in \NN} \mathcal{H}^n$, since $\tau_{\rho_n} \to 0$ almost surely as $n \to \infty$. Therefore, by martingale convergence (see e.g.\ Theorem 4.3 of \cite[Chapter VII]{shiryaev_probability_1996}),
	\begin{equation}
		\EE\left[e_k\left(\theta_{\tau_{\rho_1}}\right) \vline \mathcal{H}^n\right] \xrightarrow{n \to \infty} \EE\left[e_k\left(\theta_{\tau_{\rho_1}}\right) \vline \mathcal{H}_\infty\right],
	\end{equation}
	in $\mathcal{L}^1$ and almost surely.
	
	We now fix $n \in \NN$ and consider
	\begin{equation}
	\begin{split}
		\abs{\EE\left[e_k\left(\theta_{\tau_{\rho_1}}\right) \vline \mathcal{H}^n\right]} & = \abs{\EE\left[e_k\left(\theta_{\tau_{\rho_n}}\right)e_k\left(\theta_{\tau_{\rho_1}} - \theta_{\tau_{\rho_n}}\right) \vline \mathcal{H}^n\right]}.
	\end{split}
	\end{equation}
	By the same conditional independence arguments as we used in the proof of uniformity, $\theta_{\tau_{\rho_n}}$ is independent of $\mathcal{H}^n$. Since $\tau_{\rho_1} \geq \tau_{\rho_n}$ pointwise, $\theta_{\tau_{\rho_1}} - \theta_{\tau_{\rho_n}}$ is $\mathcal{H}^n$-measurable. Therefore
	\begin{equation}
	\begin{split}
		\abs{\EE\left[e_k\left(\theta_{\tau_{\rho_n}}\right)e_k\left(\theta_{\tau_{\rho_1}} - \theta_{\tau_{\rho_n}}\right) \vline \mathcal{H}^n\right]} & = \abs{e_k\left(\theta_{\tau_{\rho_1}} - \theta_{\tau_{\rho_n}}\right)}\abs{\EE\left[e_k\left(\theta_{\tau_{\rho_n}}\right)\right]}\\
		& = \abs{\EE\left[e_k\left(\theta_{\tau_{\rho_n}}\right)\right]}\\
		& = 0,
	\end{split}
	\end{equation}
	by the uniformity of $\theta_{\tau_{\rho_n}}$.
	
	Hence
	\begin{equation}
		\abs{\EE\left[e_k\left(\theta_{\tau_{\rho_1}}\right) \vline \mathcal{H}^n\right]} = 0, \quad \text{for all} \quad n \in \NN,
	\end{equation}
	and so, by martingale convergence,
	\begin{equation}
		\EE\left[e_k\left(\theta_{\tau_{\rho_1}}\right) \vline \mathcal{H}_\infty\right] = 0.
	\end{equation}
	Taking $Y$ to be any bounded $\mathcal{H}_\infty$-measurable random variable, we then have
	\begin{equation}
		\EE\left[Y e_k\left(\theta_{\tau_{\rho_1}}\right)\right] = \EE\left[Y \EE\left[e_k\left(\theta_{\tau_{\rho_1}}\right) \vline \mathcal{H}_\infty\right]\right] = 0.
	\end{equation}		
	Hence $\theta_{\tau_{\rho_1}}$ is independent of $\mathcal{H}_\infty$.
\end{proof}

The following uniqueness result is an immediate corollary of \Cref{prop:approx-unif-indep}.

\begin{corollary}\label[corollary]{cor:approx-unique}
	Uniqueness in law holds for \eqref{eq:SDE-approx} up to the first hitting time of the origin.
\end{corollary}

\begin{proof}
	Given a pair of processes $(X, B)$ satisfying \eqref{eq:SDE-approx}, write $R$ and $\theta$ for the radius and angle processes of $X$, respectively. Then, as shown in \Cref{rem:lambda-unique-strong}, $R$ is the unique non-negative solution of \eqref{eq:R-lambda-sde}. Recall the notation $\tau_\rho = \inf\{t > 0 : \; R_t = \rho\}$ for $\rho \geq 0$. Then, by \Cref{rem:lambda-unique-strong} again, \eqref{eq:theta-lambda-sde} uniquely determines the path of $(\theta_t)_{t \in (0, \tau_0)}$, given the value of $\theta_{\tau_\rho}$ for some $\rho > 0$ with $\tau_\rho < \tau_0$. Moreover, we have $\theta_{\tau_\rho} \sim \unif[0, 2\pi)$, by \Cref{prop:approx-unif-indep}, and so the law of $(\theta_t)_{t \in (0, \tau_0)}$ is unique. Uniqueness in law for \eqref{eq:SDE-approx} on $[0, \tau_0)$ now follows.
\end{proof}

We now apply the independence result of \Cref{prop:approx-unif-indep} to conclude that the SDE \eqref{eq:SDE-approx} has no strong solution.

\begin{proof}[Proof of \Cref{thm:approx-no-strong}]The statement on uniqueness in law is proved in \Cref{cor:approx-unique}. Now suppose that $X$ is a strong solution of the SDE \eqref{eq:SDE-approx}. Then there is an $\RR_+$-valued $\FF^B$-adapted process $R$ satisfying the SDE \eqref{eq:R-lambda-sde} with $R_0 = 0$, and an $\RR/2\pi\ZZ$-valued $\FF^B$-adapted process $\theta$ satisfying the SDE \eqref{eq:theta-lambda-sde} such that
	\begin{equation}
		X_t = R_t
		\begin{bmatrix}
			\cos \theta_t\\
			\sin \theta_t
		\end{bmatrix},
		\quad t > 0.
	\end{equation}
	Recall the definition
	\begin{equation}
		\tau_{\rho} \coloneqq  \inf\left\{t > 0 \colon R_t = \rho\right\}, \quad \rho \geq 0,
	\end{equation}
	and fix $\rho > 0$ such that $\tau_\rho < \tau_0$.
	Then, by \Cref{prop:approx-unif-indep}, $\theta_{\tau_\rho}$ is independent of $\mathcal{H}_\infty$.
	
	Under our assumption that $\theta$ is adapted to $\FF^B$, this implies that
	\begin{equation}\label{eq:inclusion-hitting-time}
		\mathcal{H}_{\tau_\rho} \subsetneq \mathcal{F}^\theta_{\tau_\rho} \subseteq \mathcal{F}^B_{\tau_\rho}.
	\end{equation}
	However, we claim that $B$ is adapted to $\mathcal{H}$.
	
	To prove this claim, observe that, for any $0 < s < t < \tau_0$, the random variable
	\begin{equation}
		\langle \theta \rangle_t - \langle \theta \rangle_s = \int_s^t R_r^{-2} \D r
	\end{equation}
	is $\mathcal{H}_t$-measurable. Since $R_r > 0$ almost surely for $r > 0$, as we proved in \Cref{prop:lambda-return-origin}, $R_t$ is also $\mathcal{H}_t$-measurable.
	
	Now, from the SDE \eqref{eq:R-lambda-sde}, we have that
	\begin{equation}
		R_t - R_s = \lambda (B_t - B_s) + \int_s^t\frac{1 - \lambda^2}{2 R_r}\D r,
	\end{equation}
	and so $B_t - B_s$ is $\mathcal{F}^R_t$-measurable. Since $B_s \to 0$ as $s \to 0$, we can conclude that
	\begin{equation}\label{eq:inclusion-B-R-H}
		\mathcal{F}^B_t \subseteq \mathcal{F}^R_t \subseteq \mathcal {H}_t.
	\end{equation}
	Setting $t = \tau_{\rho}$ and combining the two inclusions  \eqref{eq:inclusion-hitting-time} and \eqref{eq:inclusion-B-R-H}, we arrive at the following contradiction:
	\begin{equation}
		\mathcal{F}^B_{\tau_{\rho}} \subseteq \mathcal{F}^R_{\tau_{\rho}} \subseteq \mathcal{H}_{\tau_{\rho}} \subsetneq \mathcal{F}^\theta_{\tau_{\rho}} \subseteq \mathcal{F}^B_{\tau_{\rho}}.
	\end{equation}
	Hence there is no strong solution of the SDE \eqref{eq:SDE-approx}.
	\end{proof}

\section{Application to a problem of stochastic control of martingales}\label{sec:control}

We now apply the result of \Cref{thm:no-strong-solution} to the control problem studied in \cite{cox_optimal_2021}. In \cite{cox_optimal_2021}, we find the value function for a $d$-dimensional control problem with radially symmetric running cost $f(x) = \tilde{f}(\lvert x \rvert)$, under mild regularity assumptions, including that $f$ is continuous away from the origin. We reformulate the control problem here as follows.

Fix $d \geq 2$, $R > 0$, and define the domain $D = \{x \in \RR^2 : \; |x| < R\}$. Also define the set of matrices $U \coloneqq  \left\{\sigma \in \RR^{d, d} \; \colon \trace(\sigma \sigma^\top) = 1\right\}$. The strong control problem is to find the value function $v^S: D \to \RR$, given by
\begin{equation}
	v^S(x) = \inf_{\nu \in \mathcal{U}}\EE^x \left[\int_0^\tau f(X^\nu_t) \D t\right], \quad x \in D,
\end{equation}
where $\mathcal{U}$ is the set of progressively measurable $U$-valued process, and for $\nu \in \mathcal{U}$, $X^\nu_t = x + \int_0^t \nu_s \D B_s$ and $\tau \coloneqq  \inf\{t \geq 0 \; \colon \; \lvert X^\nu_t \rvert \geq R\}$.
There is a corresponding weak version of the control problem, to find the weak value function $v^W$, where we optimise over solutions of martingale problems, rather than stochastic integrals.

 In dimension $d = 2$, the result \cite[Theorem 5.12]{cox_optimal_2021} does not treat the strong control problem at the origin under the condition that $\int_0^r \tilde{f}(s) \D s = \infty$ but $\int_0^r s\tilde{f}(s)\D s < \infty$ for all $r > 0$. We will apply \Cref{thm:no-strong-solution} to extend the result to this case.
 
 We recall the definition of the candidate value function $V: D \to \RR$ from \cite[Definition 4.6]{cox_optimal_2021}.
	For $k \in \NN$ and $i = 0, \dotso, k$, introduce the constant
	\begin{equation}
		\mathfrak{F}^k_i \coloneqq  2\sum_{j = i + 1}^k \left[(r_j - s_{j - 1}) s_{j - 1} \tilde{f}(s_{j - 1}) + \int_{s_{j - 1}}^{r_j} \int_{s_{j -1}}^s \tilde{f}(t) \D t \D s + \int_{r_j}^{s_j} s \tilde{f}(s) \D s\right],
	\end{equation}
	and consider the following two cases.
	If $\tilde{f}$ is increasing in $(0, \eta)$, then set $s_0 = 0$, define $(s_i, r_i)$ by 
	\begin{equation}\label{eq:caseI-switching}
		\begin{split}
			s_{i} & \coloneqq \inf\left\{r > r_i \colon \tilde{f}^\prime_+(s) > 0\right\},\quad
			r_{i+1} \coloneqq  \inf\left\{r > s_i \colon s_i \tilde{f}(s_i) + \int_{s_i}^r \tilde{f}(s) \D s > r \tilde{f}(r)\right\},
		\end{split}
	\end{equation} and let $K \in \NN$ be such that $R \in (s_{K - 1}, s_K]$. For $x \in D$, define
	\begin{equation}
	\begin{split}
		V(x) & = - 2 \int_{R \vee r_K}^{s_K} s \tilde{f}(s) \D s - 2(r_K - R \wedge r_K) s_{K - 1} \tilde{f}(s_{K - 1}) - 2 \int_{R \wedge r_K}^{r_K} \int_{s_{K - 1}}^s \tilde{f}(t) \D t \D s\\
		& \quad + 2 \sum_{i = 1}^K \ind{(s_{i - 1}, s_i]}(\abs{x}) \left[(r_i - \abs{x} \wedge r_i) s_{i - 1} \tilde{f}(s_{i -1 }) + \int_{\abs{x} \wedge r_i}^{r_i} \int_{s_{i - 1}}^s \tilde{f}(t) \D t \D s + \int_{\abs{x} \vee r_i}^{s_i} s \tilde{f}(s) \D s + \mathfrak{F}^K_i\right].
	\end{split}
	\end{equation}
	If $\tilde{f}$ is decreasing in $(0, \eta)$, then set $r_0 = 0$, define $(r_i, s_i)$ by
	\begin{equation}\label{eq:caseII-switching}
		\begin{split}
			r_{i+1} & \coloneqq  \inf\left\{r > s_i \colon s_i \tilde{f}(s_i) + \int_{s_i}^r \tilde{f}(s) \D s > r \tilde{f}(r)\right\}, \quad s_{i+1} \coloneqq \inf\left\{r > r_{i + 1} \colon \tilde{f}^\prime_+(r) > 0\right\},
		\end{split}
	\end{equation}
	and let $L \in \NN$ be such that $R \in (r_L, r_{L + 1}]$. For $x \in D$, define
	\begin{equation}
	\begin{split}
		V(x) & = - 2 \int_{R \wedge s_L}^{s_L} s \tilde{f}(s) \D s + 2(R \vee s_L - s_L) s_L \tilde{f}(s_L) + 2 \int_{s_L}^{R \vee s_L} \int_{s_L}^s \tilde{f}(t) \D t \D s\\
		& \quad + 2 \sum_{i = 0}^L \ind{(r_i, r_{i + 1}]}(\abs{x}) \left[\int_{\abs{x} \wedge s_i}^{s_i} s \tilde{f}(s) \D s  - (\abs{x} \vee s_i - s_i) s_i \tilde{f}(s_i) - \int_{s_i}^{\abs{x} \vee s_i} \int_{s_i}^s \tilde{f}(t) \D t \D s + \mathfrak{F}^L_i \right].
	\end{split}
	\end{equation}
Note in particular that, in the case that $\tilde{f}$ is decreasing on the interval $(0, \eta)$, $V$ satisfies
\begin{equation}\label{eq:value-origin-dpp}
	V(0) = 2 \int_0^\eta \xi \tilde{f}(\xi) \D \xi + V(y),
\end{equation}
for any $y \in D$ with $\lvert y \rvert = \eta$.

The generalisation of \cite[Theorem 5.12]{cox_optimal_2021} is the following.

\begin{theorem}\label{thm:strong-extension}
	Suppose that $\tilde{f}$ is continuous on $(0, R)$ and monotone on some interval $(0, \eta) \subset (0, R)$, and that the one-sided derivative $\tilde{f}^\prime_+(r)$ exists for all $r \in (0, R)$ and changes sign only finitely many times. Then
	\vspace{-1ex}
	\begin{equation}
		v^S(x) = v^W(x) =
		\begin{cases}
			- \infty, & \text{if} \; \int_0^r \tilde{f}(s) \D s = - \infty, \; \text{for any} \; r > 0,\\
			+\infty, & \text{if} \; x = 0 \; \text{and} \; \int_0^r s \tilde{f}(s) \D s = \infty, \; \text{for any} \; r > 0,\\
			V(x) \in (- \infty, +\infty), & \text{otherwise}.
		\end{cases}
	\end{equation}
\end{theorem}

In \cite{cox_optimal_2021} we show that the weak value function $v^W$ has the form given in \Cref{thm:strong-extension}. Moreover, we show that $v^S(x) = v^W(x)$ except possibly in the two-dimensional case at the origin when
\begin{equation}\label{eq:decr-inter-growth}
	\int_0^r \tilde{f}(s) \D s = \infty, \quad \text{but} \quad \int_0^r s\tilde{f}(s)\D s < \infty, \quad \text{for all} \quad r > 0.
\end{equation}
To conclude the proof of \Cref{thm:strong-extension} we will now show that $v^S(0) = V(0)$ under the above conditions.

\subsection{Equivalence of weak and strong control problems}\label{sec:weak-equals-strong}

Fix a probability space $(\tilde{\Omega}, \tilde{\mathcal{F}}, \tilde{\PP})$ on which a $\RR$-valued Brownian motion $B$ is defined with natural filtration $\FF^B = (F^B_t)_{t \geq 0}$. We know that there exists a weak solution $\left((X, W), \psp, \FF\right)$ of \eqref{eq:SDE-gamma} by Theorem 4.3 of \cite{larsson_relative_2021}. For $\tau_\eta\coloneqq  \inf \{t \geq 0 \; \colon \; \lvert X_t \rvert \geq \eta\}$, we can calculate that
	\begin{equation}
		\EE^0 \left[\int_0^{\tau_\eta} f( X_s)\D s\right] = 2 \int_0^\eta \xi \tilde{f}(\xi) \D \xi.
	\end{equation}
	We will show that there exists an $\FF^B$-martingale $\tilde{X}$ that is equal in law to $X$. This is the key step required to complete the proof of \Cref{thm:strong-extension}.

We will make use of the notion of \emph{isomorphisms} between filtered probability spaces in the following proof. We take the following definitions from the paper \cite{azema_vershiks_2001} of \'Emery and Schachermayer.	

\begin{definition}[Isomorphism]\label[definition]{def:isomorphism}
	Given a probability space $\psp$, denote the set of random variables on that probability space by $\mathcal{L}^0\psp$. An \emph{embedding} of $\psp$ into another probability space $(\overline{\Omega}, \overline{\mathcal{F}}, \overline{\PP})$ is a map
	\begin{equation}
		\Psi: \mathcal{L}^0\psp \to \mathcal{L}^0(\overline{\Omega}, \overline{\mathcal{F}}, \overline{\PP})
	\end{equation}
	that commutes with Borel operations on random variables and preserves probability laws.
	
	An \emph{isomorphism} from $\psp$ to $(\overline{\Omega}, \overline{\mathcal{F}}, \overline{\PP})$ is an embedding that is bijective.
\end{definition}

\begin{remark}
	We follow the same convention as in \cite{donati-martin_standardness_2011}  and also write $\Psi$ for the map in the above definition acting on sigma-algebras, stochastic processes and filtrations.
\end{remark}

\begin{definition}\label[definition]{def:filter-isomorphism}
	Two filtered probability spaces $(\Omega, \mathcal{F}, \PP, \FF)$ and $(\overline{\Omega}, \overline{\mathcal{F}}, \overline{\PP}, \overline{\FF})$, with $\FF = (\mathcal{F}_t)_{t \geq 0}$ and $\overline{\FF} = (\overline{\mathcal{F}}_t)_{t \geq 0}$, are \emph{isomorphic} if there exists an isomorphism
	\begin{equation}
		\Psi: \mathcal{L}^0(\Omega, \mathcal{F}_\infty, \PP) \to \mathcal{L}^0(\overline{\Omega}, \overline{\mathcal{F}}_\infty, \overline{\PP})
	\end{equation}
	such that $\Psi(\FF) = \overline{\FF}$.
\end{definition}

In \cite{donati-martin_standardness_2011}, Laurent gives similar definitions to the above for filtrations in discrete negative time. We will refer to results from \cite{donati-martin_standardness_2011} in the following proof.

\begin{proof}[Proof of \Cref{thm:strong-extension}]
	It is shown in \cite[Theorem 5.12]{cox_optimal_2021} that the conclusion of the theorem holds in all cases except in dimension $d = 2$ at the origin, under the conditions
	\begin{equation}
		\int_0^r \tilde{f}(s) \D s = \infty, \qandq \int_0^r s \tilde{f}(s) \D s < \infty , \quad \text{for any} \quad r > 0.
	\end{equation} In this case, \cite[Lemma 5.11]{cox_optimal_2021} shows that $v^W(0) = V(0)$. We now prove that $v^S(0) = V(0)$, thus completing the proof of the theorem.
	
	Fix a probability space $(\tilde{\Omega}, \tilde{\mathcal{F}}, \tilde{\PP})$ on which a $\RR$-valued Brownian motion $B$ is defined with natural filtration $\FF^B = (\mathcal F^B_t)_{t \geq 0}$, and recall the definition of the control set $\mathcal{U}$ given above. We will construct an $\FF^B$-martingale $X^{\tilde{\nu}}$ such that, for any $t \geq 0$,
	\begin{equation}
		X^{\tilde{\nu}}_t = \int_0^t \tilde{\nu}_s \D B_s,
	\end{equation}
	for some $\tilde{\nu} \in \mathcal{U}$, and
	\begin{equation}
		\EE^0\left[\int_0^{\tau_\eta} f(X^{\tilde{\nu}}_s) \D s \right] = 2 \int_0^\eta \xi \tilde{f}(\xi) \D \xi.
	\end{equation}
	
	By \Cref{thm:no-strong-solution}, there exists a unique (in law) weak solution $\left((X, B^\prime), \psp, \FF\right)$ of the SDE \eqref{eq:SDE-gamma} and that the process $X$ generates a Brownian filtration. That is, there exists a Brownian motion $W$ on the probability space $\psp$ with natural filtration $\FF^W = (\mathcal{F}^W_t)_{t \geq 0}$ such that the natural filtration of $X$ is equal to $\FF^W$.
	
	Since $B$ and $W$ are both $\RR$-valued Brownian motions, they have have the same law and so, as noted in Section 1.6 of \cite{donati-martin_standardness_2011}, the filtered probability spaces
	\begin{equation}
		(\tilde{\Omega}, \tilde{\mathcal{F}}, \tilde{\PP}, \FF^B) \qandq (\Omega, \mathcal{F}, \PP, \FF^W)
	\end{equation}
	are isomorphic, as defined in \Cref{def:filter-isomorphism}. That is, there exists an isomorphism
	\begin{equation}
		\Psi: \mathcal{L}^0(\Omega, \mathcal{F}^W_\infty, \PP) \to \mathcal{L}^0(\tilde{\Omega}, \tilde{\mathcal{F}}^B_\infty, \tilde{\PP}),
	\end{equation}
	as defined in \Cref{def:isomorphism}, such that
	\begin{equation}
		\Psi(\FF^W) = \FF^B.
	\end{equation}
	Now define a process $\tilde{X}$ on the probability space $(\tilde{\Omega}, \mathcal{F}^B_\infty, \tilde{\PP})$ by
	\begin{equation}
		(\tilde{X}_t)_{t \geq 0} = \Psi\left((X_t)_{t \geq 0}\right).
	\end{equation}
	For any $t \geq 0$, we have that $\Psi(\mathcal{F}^W_t) = \mathcal{F}^B_t$ and
	$
		\Psi: \mathcal{L}^0(\Omega, \mathcal{F}^W_t, \PP) \to \mathcal{L}^0(\tilde{\Omega}, \tilde{\mathcal{F}}^B_t, \tilde{\PP})
	$
	is an isomorphism, as noted after the definition of an isomorphism in \cite{azema_vershiks_2001}. Therefore, since $X$ is adapted to $\FF^W$, it follows that $\tilde{X}$ is adapted to $\FF^B$.
	
	Now fix $0 < s < t$. Then, using Lemma 5.3 of \cite{donati-martin_standardness_2011} to apply the isomorphism $\Psi$ to a conditional expectation, we see that
	\begin{equation}
	\begin{split}
		\EE^{\tilde{\PP}}\left[\tilde{X}_t \;\big\vert\; \mathcal{F}^B_s \right] & = \EE^{\tilde{\PP}}\left[\Psi(X_t) \;\big\vert\; \Psi(\mathcal{F}^W_s)\right]\\
		& = \Psi\left(\EE^\PP\left[X_t \;\vert\; \mathcal{F}^W_s\right]\right) = \Psi(X_s) = \tilde{X}_s,
	\end{split}
	\end{equation}
	where the third equality follows from the fact that $X$ is an $\FF^W$-martingale. Hence $\tilde{X}$ is an $\FF^B$-martingale. By the definition of an isomorphism in \Cref{def:isomorphism}, we also have that the processes $X$ and $\tilde{X}$ are equal in law.
	
	We now apply the martingale representation theorem, as found for example in Theorem 3.4 of \cite[Chapter 5]{revuz_continuous_1999}. This result implies that $t \mapsto \tilde{X}_t$ is continuous and there exists an $\FF^B$-progressively measurable $\RR$-valued process $\tilde{\nu}$ such that, for any $t \geq 0$, we have the representation
	\begin{equation}\label{eq:strong-martingale-rep}
		\tilde{X}_t = \int_0^t \tilde{\nu}_s \D B_s.
	\end{equation}
	
	We can also deduce that $\tilde{X}$ has quadratic variation $t \mapsto \langle \tilde{X} \rangle_t = t$, as follows. The quadratic variation of $X$ is $t \mapsto \langle X \rangle_t = t$, and so $t \mapsto \abs{X_t}^2 - t$ is an $\FF^W$-martingale. Using Lemma 5.3 of \cite{donati-martin_standardness_2011} again, we calculate that, for any $0 < s < t$,
	\begin{equation}
	\begin{split}
		\EE^{\tilde{\PP}}\left[|\tilde{X}_t|^2 - t \;\big\vert\; \mathcal{F}^B_s\right] & = \EE^{\tilde{\PP}}\left[\Psi(\abs{X_t}^2) \;\big\vert\; \Psi(\mathcal{F}^W_s)\right] - t\\
		& = \Psi\left(\EE^\PP\left[\abs{X_t}^2 \;\vert\; \mathcal{F}^W_s\right]\right) - t\\
		& = \Psi(\abs{X_s}^2 + t - s) - t\\
		& = |\tilde{X}_s|^2 - s.
	\end{split}
	\end{equation}
	Hence $t \mapsto |\tilde{X}_t|^2 - t$ is an $\FF^B$-martingale and so, for any $t \geq 0$, $\langle \tilde{X} \rangle_t = t$. From the representation \eqref{eq:strong-martingale-rep}, we also have that
	\begin{equation}
		t \mapsto \langle \tilde{X} \rangle_t = \int_0^t \trace(\tilde{\nu}_s \tilde{\nu}_s^\top) \D s.
	\end{equation}
	Hence $\trace(\tilde{\nu}_t \tilde{\nu}_t^\top) = 1$, for any $t \geq 0$, and so $\tilde{\nu} \in \mathcal{U}$.
	
	Since the process $\tilde{X}$ has the same law as $X$, we have
	\begin{equation}
		\EE^{\tilde\PP} \left[\int_0^{\tau_\eta} f(\tilde{X}_s)\D s\right] = \EE^{\PP} \left[\int_0^{\tau_\eta} f( X_s)\D s\right] = 2 \int_0^\eta \xi \tilde{f}(\xi) \D \xi.
	\end{equation}
	Therefore, for any $y \in D$ with $\abs{y} = \eta$, the dynamic programming principle given in \cite[Proposition A.2]{cox_optimal_2021} implies that
	\begin{equation}
	\begin{split}
		v^S(0) & \leq \EE^{\tilde{\PP}}\left[\int_0^{\tau_\eta}f(\tilde{X}_s) \D s + v^S(\tilde{X}_{\tau_\eta})\right]\\
		& = 2 \int_0^\eta \xi \tilde{f}(\xi) \D \xi + V(y)\\
		& = V(0),
	\end{split}
	\end{equation}
	where the final equality comes from \eqref{eq:value-origin-dpp}.

	Using the result that $v^W(0) = V(0)$ from Lemma 5.11 of \cite{cox_optimal_2021}, we have
	\begin{equation}
		v^S(0) \leq V(0) = v^W(0) \leq v^S(0),
	\end{equation}
	and conclude that
	\begin{equation}
		v^S(0) = v^W(0) = V(0).
	\end{equation}
\end{proof}

\subsection{Feedback controls}\label{sec:feedback}

A further question of interest is whether the value function remains the same when we restrict to \emph{feedback controls}. A control $\nu \in \mathcal{U}$ is a feedback control if it is of the form $\nu_t = \sigma(X^{\sigma, x}_t)$, where $X^{\sigma, x}$ is a strong solution of the SDE $\D X_t = \sigma(X_t) \D B_t$, $X_0 = x$, for some Borel function $\sigma: D \to U$. Such controls are also referred to as \emph{Markov controls} in the thesis \cite{robinson_stochastic_2020} and are defined similarly in Section 3 of \cite[Chapter IV]{fleming_controlled_2006} and Section 3.1 of \cite{touzi_optimal_2013}.

In our problem, if there were a strong solution of the SDE \eqref{eq:SDE-gamma}, then this would give an optimal feedback control. Having shown that this is not the case in \Cref{thm:no-strong-solution}, we are left with the open questions of whether the value function can be attained by some feedback control, and whether it can be approximated by a sequence of such controls.
In regards to the first question, we have the following result, whose proof can be found in the thesis \cite{robinson_stochastic_2020}.

\begin{proposition}\label[proposition]{prop:derivation-SDE}
	Let $\psp$ be a probability space on which an $\RR^2$-valued Brownian motion is defined with natural filtration $\FF^B$. Suppose there exists a Borel function $\sigma: D \to U$ such that the SDE
	\begin{equation}
		\D X_t = \sigma(X_t) \D B_t, \quad X_0 = 0,
	\end{equation}
	has a strong solution $X$ with $t \mapsto \abs{X_t}$ deterministically increasing. Then there exists a Borel function $\gamma: D \to \{x \in \RR^2 \colon \abs{x} = 1\}$ such that, for any $x = (x_1, x_2)^\top \in D$,
	\begin{equation}
		\sigma(x) = \frac{1}{\abs{x}}
		\begin{bmatrix}
			-x_2\\
			x_1
		\end{bmatrix}
		\gamma(x)^\top.
	\end{equation}
	Moreover, $\abs{X_t} = \sqrt{t}$, for all $t \geq 0$, and for $R > 0$, $\eta \in (0, R)$, and $\tilde{f}: [0, R)$ a continuous function, we have
	\begin{equation}
		\EE^0\left[\int_0^{\tau_\eta} \tilde{f}(\abs{X_s}) \D s\right] = 2 \int_0^\eta \xi \tilde{f}(\xi) \D \xi.
	\end{equation}
\end{proposition}
Therefore to answer the question of existence of feedback controls that attain the strong value function, one needs to study the existence of strong solutions of SDEs of the form given in \Cref{prop:derivation-SDE}. The methods used in \Cref{sec:non-existence} do not apply directly here when we now consider a two-dimensional Brownian motion.

The result of \Cref{thm:approx-no-strong} contributes to investigating whether the value function can be approximated by feedback controls, as the next result shows.

\begin{proposition}\label[proposition]{prop:lambda-approx-cost}
	Suppose that the growth condition \eqref{eq:decr-inter-growth} holds and let $\eta > 0$ be such that $\tilde{f}$ is decreasing and positive on the interval $(0, \eta)$. For $\lambda \in (0, \frac{\sqrt{2}}{2})$, if $X^\lambda$ satisfies the SDE \eqref{eq:SDE-approx}, then
	\begin{equation}
		0 \leq \EE^0\left[\int_0^{\tau_\eta}f(X^\lambda_s)\D s\right] < \infty,
	\end{equation}
	and, moreover,
	\begin{equation}
		\lim_{\lambda \downarrow 0}\EE^0\left[\int_0^{\tau_\eta}f(X^\lambda_s)\D s\right] = 2 \int_0^\eta \xi \tilde{f}(\xi) \D \xi.
	\end{equation}
\end{proposition}

Again, we refer to the thesis \cite{robinson_stochastic_2020} for a proof. Since \Cref{thm:approx-no-strong} shows that \eqref{eq:SDE-approx} has no strong solution for $\lambda \in (0, \frac{\sqrt{2}}{2})$, we have ruled out one potential approximating sequence of feedback controls. We leave open the question of whether the values coincide for the problems of optimising over strong controls and over feedback controls.

\bibliographystyle{abbrv}
\bibliography{bibliography.bib}

\end{document}